%% file: certified-galois.tex
\documentclass[11pt]{amsart}

\usepackage{amsmath}
\usepackage{hhline}
\usepackage{stackengine}
\usepackage{amsthm}
\usepackage{blkarray}
\usepackage{mathrsfs}
\usepackage[margin=1in]{geometry}
\usepackage{amssymb}
\usepackage{mathtools}
\usepackage{xcolor}
\usepackage{algorithm}
\usepackage{algpseudocode}
\usepackage{bbold}
\usepackage{cite}
\usepackage{latexsym}
\usepackage{booktabs}
\usepackage{caption}
\usepackage{siunitx} 
\usepackage{bm}
\usepackage{graphicx}

\usepackage{textcomp}
\usepackage{subfigure}
\usepackage[toc,page]{appendix}
\usepackage{url}
\usepackage{hyperref}
\usepackage{multicol}
\usepackage{multirow}
\usepackage{tikz}
\usetikzlibrary{3d}
\usepackage{tikz-3dplot}
\usepackage{pgfplots}
\usepackage{mathabx,epsfig}

\let\oldtextit\textit 
\renewcommand\emph[1]{\oldtextit{\color{blue}#1}}

\theoremstyle{definition}
\newtheorem{definition}{Definition}[section]

\usetikzlibrary{arrows}

\newcommand*\transitive{%
  \mathrel{%
    \begin{tikzpicture}[scale=0.2]%
     \draw[->>] (1,0) arc (180:0:1cm); 
    \end{tikzpicture}%
  }%
}

\usepackage{cleveref}
\newtheorem{proposition}[definition]{Proposition}

\newtheorem{lemma}[definition]{Lemma}

\newtheorem{thm}[definition]{Theorem}

\theoremstyle{remark}
\newtheorem{remark}[definition]{Remark}

\algrenewcommand\algorithmicrequire{\textbf{Input:}}
\algrenewcommand\algorithmicensure{\textbf{Output:}}
\AtBeginEnvironment{algorithmic}{%
}

\newcommand{\CC}{{\mathbb C}}

\newcommand{\mainfilecheck}[1]{1}
\DeclareMathOperator{\Mon}{Mon}
\DeclareMathOperator{\gw}{gw}

\makeatletter
\def\@settitle{\begin{center}%
  \baselineskip13\p@\relax
    \Large
\@title
  \end{center}%
}
\makeatother
\title{Certifying Galois/monodromy Actions via Homotopy Graphs}

\author{Timothy Duff}
\address{Department of Mathematics, University of Missouri - Columbia}
\email{tduff@missouri.edu}
\urladdr{https://timduff35.github.io/timduff35/}

\author{Kisun Lee}
\address{School of Mathematical and Statistical Science, Clemson University, 220 Parkway Drive, Clemson, SC 29634}
\email{kisunl@clemson.edu}
\urladdr{https://klee669.github.io}

\begin{document}

\begin{abstract}
We develop a certified numerical algorithm for computing Galois/monodromy groups of parametrized polynomial systems. Our approach employs certified homotopy path tracking to guarantee the correctness of the monodromy action produced by the algorithm, and builds on previous ``homotopy graph" frameworks. 
We conduct extensive experiments with an implementation of this algorithm, which we have used to certify properties of several notable Galois/monodromy groups which arise in several examples drawn from pure and applied mathematics.
\end{abstract}

\maketitle

\section{Introduction}

Many problems are naturally modeled as \emph{parametrized systems of algebraic equations}, \begin{equation}F(x;z)=0\label{eq:Fxz},\end{equation} with the property that for generic parameters $z$, the system $F$ has a fixed, finite number of complex-valued isolated solutions $x$.
Determining the structure of such problems and the difficulty of solving them are central themes in applied and computational algebraic geometry.

As is the case in other mathematical areas, the meanings of words like ``difficulty" and ``structure" are often clearest in hindsight. 
In our setup, the number of solutions is perhaps the simplest measure of a problem's difficulty.
However, a more refined measure is captured by the problem's \emph{monodromy group}, also known as the \emph{geometric Galois group}. 
The meaning of ``monodromy" in our setup is the traditional one: as the parameters $z$ move along a loop in parameter space, avoiding a branching locus, the solutions change continuously, eventually being permuted.

There is already a substantial body of work applying numerical methods to the computation of monodromy groups---see e.g.~~\cite{brazelton2024monodromy,duff2022galois,duff2023using,duff2025galois,pichon2025galois,hauenstein2018numerical,leysot,edwards2026computing,granboulan1996construction}.
Two characteristics shared by many of the methods developed in these works are (1) despite the potential use of numerical heuristics, the end result may be rigorously \textit{certified}, and (2) the monodromy group is calculated \textit{en route} to solving the problem~\eqref{eq:Fxz}, with little additional overhead.
\textit{In this work}, we present an approach to certified Galois/monodromy group computation that fuses these two characteristics.
Unlike previous works which strive to algorithmically determine the full Galois/monodromy group, we favor an experimental approach, which nevertheless rigorously certifies an action of some subgroup of the Galois group (\Cref{thm:correctness}). 
Even if only partial information about the group is recovered, this enables us to make rigorous claims about certain group invariants, e.g.~the \emph{Galois width} of~\cite{duff2025galois}.

Our methodology builds on previous works that compute Galois/monodromy groups using \emph{homotopy path tracking}. Despite being fast and flexible, floating-point tracking may cause \emph{path jumping}, i.e.\  switching to a different solution path, which produces incorrect correspondences and hence an incorrect group. Certifying a numerical approximation of a solution to $F$ \textit{a posteriori} (e.g., \cite{breiding2020certifying,burr2019effective,hauenstein2012algorithm,lee2019certifying,lee2024effective,krawczyk1969newton}) is insufficient in this context. We must ensure that the tracking process follows a single solution path. 
In this paper, we adapt certified path-tracking methods based on \emph{interval arithmetic}~\cite{doi:10.1137/0731048,martin2013certified,van2011reliable,xu2018approach,duff2024certified,guillemot2024validated,guillemot2025certified}, which construct a region that covers the solution path, guaranteeing that the tracking process follows a single solution path without jumping.

To compute monodromy permutations and recover the monodromy group, we revisit the graph-based framework of~\cite{duff2019solving}, where homotopy paths are assembled to form a \emph{homotopy (multi)graph} embedded in the parameter space. Each vertex is a generic point in the parameter space, and each edge between two vertices corresponds to a path connecting them. By tracking solutions along these edges in a certified manner, we rigorously obtain monodromy permutations.
Improvements over the framework first proposed in~\cite{duff2019solving} include: (1) certified path-tracking, enabling rigorous  Galois group computation  such as in \Cref{thm:correctness} and~\Cref{prop:galois-width}, (2) using \emph{spanning trees} to organize the generators of monodromy subgroups, (3) a \emph{saturation} strategy ensuring loops are fully explored.

The paper is organized as follows. In \Cref{sec:background}, we provide background on interval arithmetic, the Krawczyk method, and monodromy groups, and review fundamental algorithms for certified path tracking. \Cref{sec:homotopy-graph} describes the graph-based framework in complete detail: constructing parameter homotopies, certifying edge correspondences, and generating monodromy permutations from a (saturated) graph. Finally, \Cref{sec:experiments} presents experimental results using our \texttt{Julia} implementation, applying our approach to test cases ranging from computer vision to algebraic geometry, as well as to some univariate benchmarks.

\section{Background}\label{sec:background}

\subsection{Interval arithmetic and the Krawczyk method}
\emph{Interval arithmetic} extends standard arithmetic to intervals, executing conservative and reliable computations. For an operator $\odot$ and intervals $I,J$, we define $I \odot J := \{x\odot y \mid x\in I, y\in J\}$. For explicit formulas, and a discussion of issues related to floating-point (which we mostly ignore), we refer to the standard text \cite{moore2009introduction}.

We work over $\mathbb{C}$ by applying real interval arithmetic to real and imaginary parts independently, for instance $I:=\Re(I)+i\Im(I)$. For an interval vector $I=(I_1,\dots,I_n)\subset\mathbb{C}^n$, we define the \emph{norm}
\[
\|I\| := \max_{1\le i\le n} \max\{|\Re(I_i)|,\; |\Im(I_i)|\}
\]
where $|J| = \max\limits_{a\in J}|a|$ denotes the \emph{magnitude} of an interval $J$.
This models the real $\infty$-norm on $\mathbb{C}^n$.

Given an interval vector $I\subset\mathbb{C}^n$ and a function $f:\mathbb{C}^n\to\mathbb{C}$, an \emph{interval enclosure} of $f$ on $I$ is an interval $\square f(I)$ satisfying $\square f(I)\supset\{f(x)\mid x\in I\}$. Such enclosures are not unique in practice, as they depend on the strategy used to evaluate $f,$ and may be overly conservative.

Let $F:\mathbb{C}^n\to\mathbb{C}^n$ be a square system of polynomial or rational functions, and let $x\in\mathbb{C}^n$. For $r>0$ and an invertible matrix $A$, the \emph{Krawczyk operator} is
\[
K(F,x,r,A) := -AF(x) + (Id - A\square JF(x+rB)) rB
\]
where $B = ([-1,1]+i[-1,1])^n$.
We recall a quantitative variant of the Krawczyk test~\cite{krawczyk1969newton}.

\begin{thm}\label{thm:Krawczyk-short}\cite[Theorem 2.1]{guillemot2024validated} 
If $K(F,x,r,A)\subset r\rho B$ for some $\rho<1$, then the system $F(x)=0$ has a unique solution $x^*$ in the box $x+rB$, and $\|x-x^*\|\le r\rho$. Moreover, the quasi-Newton map $x\mapsto x-AF(x)$ is $\rho$-Lipschitz on $x+rB$.
\end{thm}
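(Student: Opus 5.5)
We will study the quasi-Newton map $N(y):=y-AF(y)$ on the box $X:=x+rB$ and show that it is a $\rho$-contraction of $X$ into itself. Brouwer's theorem would not suffice because we need uniqueness, so we aim for a contraction and then apply Banach's fixed point theorem. Since $A$ is invertible, fixed points of $N$ are exactly the zeros of $F$ in $X$.

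\textbf{Step 1: a mean-value inclusion.} For $y,z\in X$ we write
\[
F(y)-F(z)=\Big(\int_0^1 JF(z+t(y-z))\,dt\Big)(y-z).
\]
The segment lies in the convex set $X$. Each entry of the averaged Jacobian is an average of values of the corresponding entry of $JF$ over $X$. The complex interval $\square JF(X)_{ij}$ is a rectangle, hence convex, so each averaged entry lies in it. Thus
\[
N(y)-N(z)=(Id-AM)(y-z)\qquad\text{for some } M\in\square JF(X).
\]

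\textbf{Step 2: self-map.} Take $z=x$ and $y=x+rb$ with $b\in B$. Then
\[
N(y)=x-AF(x)+(Id-AM)rb\in x+K(F,x,r,A)\subset x+r\rho B\subset X.
\]
This uses the inclusion property of interval arithmetic: a product of a point matrix taken from interval enclosures with a point vector from $rB$ lies in the interval product.

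\textbf{Step 3: Lipschitz bound, which is the main obstacle.} We must pass from the set inclusion $(Id-A\square JF(X))rB\subset r\rho B$, obtained since $-AF(x)$ is just a translate and $0\in$ its enclosure is not needed, to an operator-norm bound. Here the translate by $-AF(x)$ must be handled with care. The hypothesis says that the Minkowski sum $\{-AF(x)\}+(Id-A\square JF)rB$ lies in $r\rho B$. Because $B$ is symmetric, the interval matrix-vector product $(Id-A\square JF)rB$ is a box symmetric about $0$. Suppose a box $c+S$ with $S$ symmetric lies in the box $r\rho B$. Then $S$ itself lies in $r\rho B$, since each coordinate radius of $S$ is at most $r\rho-|c_i|\le r\rho$. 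Hence for every $M\in\square JF(X)$ and every $v$ with $\|v\|\le r$ we get $\|(Id-AM)v\|\le r\rho$.

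One subtlety is that the norm is the real $\infty$-norm on real and imaginary parts. The interval product should therefore be read as enclosing the real-linear action of the complex matrix on $\mathbb{R}^{2n}$, which is what interval arithmetic on real and imaginary parts computes. Scaling gives the operator bound $\|Id-AM\|\le\rho$ in this norm. Combined with Step 1, this yields $\|N(y)-N(z)\|\le\rho\|y-z\|$, so $N$ is $\rho$-Lipschitz on $X$.

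\textbf{Step 4: conclusion.} By Steps 2 and 3, $N$ is a contraction of the complete set $X$ into itself. Banach's theorem gives a unique fixed point $x^*\in X$, hence a unique zero of $F$ in $X$. Moreover $x^*=N(x^*)\in x+r\rho B$ by Step 2, so $\|x-x^*\|\le r\rho$.
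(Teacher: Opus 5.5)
Your proof is correct. The paper does not prove this statement and instead imports it from Guillemot--Lairez; your argument is the standard contraction-mapping proof of Krawczyk's theorem: the mean-value inclusion via convexity of the rectangular Jacobian enclosure, the self-map $N(x+rB)\subset x+r\rho B$, the operator bound $\|Id-AM\|\le\rho$ read off from the centered part of $K$, and then Banach's theorem. The one implicit point worth a phrase is that, for rational $F$, finiteness of $\square JF(x+rB)$ is what guarantees $F$ is holomorphic on the box, so that the integral form of the mean value theorem in Step 1 applies.
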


When the hypotheses of~\Cref{thm:Krawczyk-short} hold, we say $x$ is a \emph{$\rho$-approximate solution} with \emph{certification radius} $r$. In practice,~\Cref{thm:Krawczyk-short} may be applied by choosing $A \approx JF(x)^{-1}$, and the inclusion $K(F,x,r,A)\subset r\rho B$ may be tested by evaluating suitable interval enclosures $\square F$, $\square JF$. 

Closely related to~\Cref{thm:Krawczyk-short} is the \emph{Krawczyk test} of~\Cref{algo:Krawczyk-test}.
This test, mathematically equivalent to checking the inclusion in \Cref{thm:Krawczyk-short} after normalizing by the radius $r$, is used throughout our certified predictor-corrector framework.

\textit{Crucially}, the constants used to evaluate $F$ in the Krawczyk test may themselves be intervals. In this case, a test output of ``true" gives us a proof that there exists a unique root in $x+rB$ for any choice of values in these intervals.

\begin{algorithm}[ht]
\caption{KrawczykTest}\label{algo:Krawczyk-test}
\begin{algorithmic}[1]
\Require 
\begin{itemize}
    \item A square system $F=\{f_1,\dots, f_n\}$, possibly with interval constants 
    \item a point $x\in\mathbb{C}^n$, or more generally an interval $x \subset \mathbb{C}^n$,
    \item a radius $r>0$, \item an invertible matrix $A\in\mathbb{C}^{n\times n}$, and
    \item a parameter $\rho\in(0,1)$.
\end{itemize}
\Ensure A boolean indicating whether $x$ certifies a unique root in $x+rB$.
\State Compute $K := -\frac{1}{r} A F(x) + (I_n - A\square JF(x+rB))\, B$.
\State \Return $\|K\| < \rho$.
\end{algorithmic}
\end{algorithm}

\subsection{Galois/monodromy groups of polynomial systems}\label{sec:galois-monodromy}

Parametric polynomial systems often have the property that there is a finite number $d>0$ of complex solutions $x\in\mathbb{C}^n$ for generic choices of parameters $z\in\mathbb{C}^m$. 
The solutions are thus given locally as algebraic functions $x_1(z), \dots , x_d (z)$, defined implicitly via an incidence variety of ``problem-solution pairs",
\begin{equation}\label{eq:solution-variety}
X = \{(z,x) \in \mathbb{C}^m \times \mathbb{C}^n \mid \tilde{F}(x;z) = 0\}, 
\end{equation}
where $\tilde{F}(x;z)$ is a system of polynomial equations (not necessarily square) which define $X$ globally.
The variety $X$ is equipped with a projection $\pi : X \to \mathbb{C}^m$ which generically $d$-to-$1$.
Often, we may also assume that $X$ is irreducible, in which case we call $X$ the \emph{solution variety}.
The integer $d$ is usually known as the \emph{generic root count} or \emph{degree} of the problem of solving $\tilde{F}.$
For a generic problem instance $z$, the fiber $\pi^{-1}(z)$ consists of exactly $d$ isolated points.

As $z$ moves along loops in $\mathbb{C}^m$ avoiding the branch locus of $\pi$, the $d$ solutions undergo analytic continuation and are permuted. Assuming $X$ is irreducible, these permutations form a transitive subgroup $\Mon_\pi\hookrightarrow S_d$, called the \emph{monodromy group} of the problem.
This group is well-defined up to conjugacy in $S_d$. As is standard, we suppress its dependence on the generic point $z_0$.

\begin{remark}\label{remark:reducible}
For certain problems, the variety $X$ may be reducible, possibly even possessing multiple components $X_1 ,\dots , X_k$ such that the restrictions $\pi_k : X_k \to \CC^m$ are dominant.
We consider such an example in~\Cref{sec:27-lines}, where we discuss $27$ lines on a symmetric cubic surface.
In such cases, the monodromy group acts intransitively as a subgroup of $\Mon_{\pi_1}\times \cdots \times \Mon_{\pi_k}$.
\end{remark}

Assuming $X$ is irreducible, $\Mon_\pi$ coincides with the geometric \emph{Galois group} of the branched cover $\pi$. If $K$ is the normal closure of the rational function field $\mathbb{C}(X)$ over the function field $L=\mathbb{C}(\mathbb{C}^m)$, then $\Mon_\pi$ as an \textit{abstract group} is isomorphic to the Galois group of the finite extension of fields $K/L$~\cite[Section 1]{yahl}. However, here we are mainly concerned with the structure of $\Mon_\pi$ as a \textit{permutation group} acting on $\pi^{-1}(z).$

Understanding the monodromy group provides structural information about the solution set, such as transitivity, decomposability, or the presence of nontrivial symmetries. By numerically tracking solutions along loops, one directly obtains permutations of the fiber. \Cref{sec:homotopy-graph} develops a framework for computing these permutations using parameter homotopies and interval arithmetic.

\subsection{Certified path tracking}\label{subsec:certified-track} 
\emph{Homotopy path tracking}, also known as numerical continuation, is a standard numerical method for solving systems of nonlinear equations. 
Such methods construct a start system $G:\mathbb{C}^n \to \mathbb{C}^n$ with known solutions and a homotopy function $H(x;t): \mathbb{C}^n \times [0,1] \to \mathbb{C}^n$ such that $H(x;0) = G(x)$,  ideally with the property that any isolated, non-singular solution of $G$ extends to an analytic solution path $x(t) : [0,1] \to \CC^n$ with $H(x(t) ; t) = 0$ for all $t\in [0,1].$ 

In practice, tracking a solution path from $t = 0$ to $t = 1$ is carried out using a numerical predictor-corrector method~\cite[Chapter 2.3]{SommeseWampler:2005}. 
Starting from $t_0 =0$ and an approximation $x_0 \approx x(0)$, such methods will next determine some point $t_1 \in (t_0, 1]$ along with an approximation $x_1 \approx x(t_1)$. Iterating this process, one may obtain a sequence of discrete time steps $0 = t_0 < t_1 < \dots < t_k = 1$ and corresponding approximations of the local solution path $x_0, x_1, \dots, x_k\in \CC^n$.

\emph{Certified homotopy path tracking} algorithms rigorously guarantee correctness by enclosing the path $x(t)$ within a compact region where $H(x; t^*) = 0$ has a unique solution for all $t^* \in [0,1]$. This region is constructed by computing discrete time steps $0 = t_0 < t_1 < \cdots < t_k = 1$, along with interval boxes $I_i$ that contain the solution path $x(t)$ uniquely over each subinterval $[t_{i-1}, t_i]$.

From a practical perspective, the main challenge of certified homotopy path tracking is that the associated step-sizes $t_{i} - t_{i-1}$ will typically be much smaller than those used by non-certified methods.
This challenge has motivated a large amount of work on certified tracking, predominantly focused on two classes of methods. One class of methods \cite{beltran2012certified,beltran2013robust,hauenstein2014posteriori} relies on Smale’s $\alpha$-theory~\cite[Chapter 8]{blum1998complexity}, while another \cite{doi:10.1137/0731048,martin2013certified,van2011reliable,xu2018approach,duff2024certified,guillemot2024validated,guillemot2025certified} uses interval arithmetic to validate approximate solutions and construct certified enclosures of the path. 

In this paper, we closely follow the interval-based method of~\cite{guillemot2024validated}, which empirically demonstrated a major improvement over previous certified path-trackers.
This method has three main ingredients:

\begin{enumerate}
    \item \textbf{Prediction: } The core idea of any certified path tracking algorithm is to construct a series of interval boxes $I_1,\dots, I_k$ that begin from one point on (or near) the path $x(t)$ and cover a portion of the path uniquely. To construct such an interval box, we use a numerical approximation $x\approx x(t)$. If the Krawczyk test (\Cref{algo:Krawczyk-test}) passes for a suitable region containing $x$, we proceed to the next step of tracking.

    Given a solution path $x(t)$ and an approximation $x \approx x (t^*)$ for some $t^*\in (0,1)$, a \emph{predictor} is simply some mapping $X(\eta):[t^*,\infty)\to \mathbb{C}^{n}$ with $X(t^*)=x$, constructed to approximate $x(t)$ for $t$ near $t^*.$
    Perhaps the simplest possible choice is the order-$0$ constant predictor; $X(\eta ) = x$ for all $\eta \ge t^*.$
    A more effective class of higher-order predictors, used for certified homotopy tracking in~\cite[Section 6.3]{guillemot2024validated} and~\cite{van2011reliable}, is based on the notion of Taylor model~\cite[Section 9.3]{moore2009introduction}, which can be used to construct a curved interval box containing a higher-order approximation of $x(t)$; for example, the Hermite predictor defined in equation~\eqref{eq:hermite-predictor} below.

\item \textbf{Refinement: }
\Cref{algo:meta_refine} formalizes a refinement procedure, which attempts to improve $x$, a $\tau$-approximate solution of some system $F$, by replacing it with a $\rho$-approximate solution with $\rho \ll \tau .$
This enables larger step-sizes to be chosen in future predictor steps.  

\begin{algorithm}[ht]
\caption{Refine (cf.~\cite[Algorithm 2]{guillemot2024validated})}
\label{algo:meta_refine}
\begin{algorithmic}[1]
\Require  
\begin{itemize}
    \item A square system $F=\{f_1,\dots, f_n\}$ of polynomial or rational functions,
    \item a $\tau$-approximate solution $x\in \mathbb{C}^n$ to $F$ with certification radius $r$ (hence $\tau , r \in (0,1)$),
    \item an $n\times n$ invertible matrix $A$,
    \item a contraction factor $g \in (0, 1-\tau)$, and
    \item a target constant $\rho\in (0,1)$ (a parameter used in~\Cref{algo:Krawczyk-test}; typically $\rho < \tau$).
\end{itemize}
\Ensure A $\rho$-approximate solution $x$, a certification radius $\tilde{r}$, and an invertible matrix $\tilde{A} \in \CC^{n\times n}$.
\State{Set $\tilde{r}=r$}
\State{Set $\tilde{A}=A$} 
\While{$\text{KrawczykTest}(F, x, \tilde{r}, \tilde{A}, \rho)=false$}\label{line:while_loop_in_Refine-1}
\If{$\|A F(x)\|\leq g \rho \tilde{r}$}
\State{Set $\tilde{r}=\frac{1}{2}\tilde{r}$}
\Else
\State{Set $x=x-AF(x)$}
\EndIf
\State{Set $\tilde{A}=JF(x)^{-1}$}
\EndWhile
\While{$2\tilde{r}\leq 1$ and $\text{KrawczykTest}(F, x, 2\tilde{r}, \tilde{A}, \rho)=true$}\label{line:while_loop_in_Refine-2}
\State{Set $\tilde{r}=2\tilde{r}$.}
\EndWhile
\State{Return $x,\tilde{r},\tilde{A}$.}
 \end{algorithmic}
 \end{algorithm}

The first while loop in~\Cref{algo:meta_refine} (starting on line~\ref{line:while_loop_in_Refine-1}) reduces the radius at each iteration, potentially resulting in a narrow region. When the interval becomes overly narrow, the solution path is more likely to drift outside the box, making reliable tracking difficult. The second while loop (line~\ref{line:while_loop_in_Refine-2}) attempts to mitigate this effect, using improved estimates obtained from the first loop to certify a larger radius for the Krawczyk test.

\Cref{algo:meta_refine} is more-or-less equivalent to \cite[Algorithm 2]{guillemot2024validated}, where specific values for the parameters $g,\rho$ and $\tau$ were chosen to balance good practical performance with a theoretical analysis of termination under an adaptive precision model~\cite[Section 4.2]{guillemot2024validated}. 

    \item \textbf{Tracking: } A fully certified path tracking algorithm for a homotopy $H(x;t)$ proceeds from a certified approximate solution $x$ to the start system $H(x;0)$,
    alternating between the predictor and refinement steps described above. This is outlined in~\Cref{algo:certified_curve_tracking}.

\begin{algorithm}[ht]
	\caption{CertifiedTrack}
 \label{algo:certified_curve_tracking}
\begin{algorithmic}[1]
\Require  
\begin{itemize}
    \item A homotopy $H(x;t):\mathbb{C}^{n}\times [0,1]\to \mathbb{C}^n$ with a regular solution path $x(t)$,
    \item constants $\rho\in(0,\frac{1}{2}]$ and $\tau\in(\frac{1}2,1)$,
    \item a $\rho$-approximate solution $x\in\mathbb{C}^n$ to $H(x;0)$,
    \item a predictor $X(\eta)$ (possibly with additional arguments),
    \item a constant $0<h<1$,
    \item a constant $0<r<1$, and
    \item constants $\delta_{inc}>1$ and $0<\delta_{dec}<1$.
\end{itemize}
\Ensure {A $\rho$-approximate solution $x$ of $H(x;1)$.}
\State{Set $t=0$,}
\State{Set $A= \partial_xH(x;t)^{-1}$.}
\While{$t<1$} \label{line:track-outer-while}
\State{Set $(x, r, A) =\text{Refine}(H(x;t),x,r,A,\rho)$.}\label{line:refine} \Comment{\Cref{algo:meta_refine}}
\State{Set $h=\delta_{inc}h$.} 
\While{$\text{KrawczykTest}(H(x;[t,t+h]),X([t,t+h]),r,A,\tau)=false$}\label{line:10}
\State{$h=\delta_{dec}h$.}
\EndWhile
\State{Set $t= \min ( t+h, 1)$.}
\EndWhile
\State{Set $(x, r, A) =\text{Refine}(H(x;1),x,r,A,\rho)$.}\label{line:6}
\State{Return $x$.}
 \end{algorithmic}
 \end{algorithm}
 
Within the outer while loop (\ref{line:track-outer-while}), we refine a previous approximate solution $x$ in order to obtain a $\rho$-approximate solution to the homotopy $H(x;t)$ at time $t$. The inner while loop (\cref{line:10}) determines a suitable time-step $h$ by applying the Krawczyk test (\Cref{algo:Krawczyk-test}), so that $x$ remains an approximate solution to the homotopy at time $t+h.$

\Cref{algo:certified_curve_tracking} is heavily inspired by \cite[Algorithm 4]{guillemot2024validated}, which makes particular choices for the predictor $X$ and the step increase/decrease factors $\delta_{inc}, \delta_{dec}.$
Theoretically, this algorithm will produce an approximate solution to the target system $H(x;1)=0$, assuming that the start solution extends to a smooth solution path and that all arithmetic operations are exact.
In practice, one must of course reckon with inexact fixed-precision floating-point arithmetic.
A careful theoretical analysis of termination for a version of this algorithm has been provided in~\cite[Section 6.3]{guillemot2024validated}, assuming inexact arithmetic with adaptive precision.

On the other hand, even for inexact fixed-precision arithmetic, we can be sure that~\Cref{algo:certified_curve_tracking} certifies a solution path, assuming that it does actually terminate as well as certain properties of the implementation of interval arithmetic (e.g.~conservative outward rounding of endpoints).
In particular, assuming that tracking terminates, the output $x$ will be a certified approximate solution to the target system $H(x;1)=0$.
In our experiments, we always work with fixed precision, chosen sufficiently large so that all tracked paths terminate.
\end{enumerate}

\section{Certified monodromy computation with homotopy graphs}\label{sec:homotopy-graph}

In this section, we elaborate on the process of computing the monodromy action using parameter homotopy, in which the resulting monodromy permutations are obtained in a fully certified manner.

\subsection{Parameter homotopies and edge correspondences}

In general, monodromy heuristics must begin by calling some \emph{fabrication procedure}, capable of producing a (sufficiently generic) problem-solution pair $(z_0, x_0) \in X\subset \mathbb{C}^m\times \mathbb{C}^n$ (or, more generally, one such pair for every component of $X$ dominating the parameter space $\mathbb{C}^m$). 
Although the exact fabrication strategy used is typically problem-dependent, we assume such a strategy is given in addition to $X$ as part of the input.
This assumption, perhaps seemingly restrictive, is quite natural for many problems in science and engineering, where solving the equations~\eqref{eq:Fxz} in $x$ for a given $z$ is the \emph{inverse problem} relative to the more tractable \emph{forward problem} of producing $z$ from $x.$

We further require a system of equations $F = \{f_1(x;z), \dots, f_n(x;z)\}$ which locally defines $X$ near $(z_0, x_0)$. More precisely, the system $F$ may consist of polynomial or rational  functions,
and we require that the Jacobian matrix with respect to the solution variables,
\[
\partial_xF(x_0;z_0) = \left(\frac{\partial f_i}{\partial x_j}\right)_{i,j=1}^n\bigg|_{(z_0, x_0)},
\]
has full rank $n$ for any generic sample $(z_0, x_0) \in X$. This ensures, by the implicit function theorem, the solution variables can be locally expressed as a smooth function $x = g(z)$ near $(z_0, x_0)$.

Given a fabricated problem-solution pair $(z_0, x_0) \in X$, we generate a homotopy graph (defined below) on the parameter space. To create the vertices in the homotopy graph, we generate $\ell$ new generic problem instances $z_i \in \mathbb{C}^m$ for $i=1, \dots, \ell$. To form the edges of our homotopy graph, we consider paths between pairs of problem instances $(z_i, z_j)$ for $i, j = 0, \dots, \ell$. Each path $p(t)$ is defined as the linear segment $p(t) := (1-t)z_i + t z_j$. We assume that for all $t \in [0, 1]$, the fiber $\pi^{-1}(p(t))$ consists of $d$ distinct, isolated solutions. For each such path $p(t)$, we define the corresponding \emph{parameter homotopy} $H(x; t) := \{f_1(x; p(t)), \dots, f_n(x; p(t))\}$. Each edge $e=(z_i,z_j)\in E$ is equipped with its associated parameter homotopy. The collection of vertices $V=\{z_0,\dots,z_\ell\}$ and edges $E$ given by the chosen paths $p(t)$ form a complete graph, and denoted by $\mathcal{G} = (V,E)$. 

More generally, a \emph{homotopy graph} $\mathcal{G}$ may be any connected multigraph as in~\cite{duff2019solving}, allowing the possibility that two vertices are connected by any number of edges. In the case of parallel edges between the same pair of vertices, these should be decorated with different analytic connecting paths $p_e(t)$ (not all straight lines!).
In all cases, it is required that the fiber $\pi^{-1}(p_e(t))$ consists of $d$ distinct, isolated solutions for all $t\in[0,1]$, i.e.~each $p_e(t)$ avoids the discriminant. 

In the constructed homotopy graph, tracking along an edge $e$ from a parameter instance $z_i$ with a known solution $x_i$ to a neighbor $z_j$ yields a new solution $x_j$. In this case, we obtain the new problem-solution pair $(z_j,x_j)$. Also, we record the specific correspondence from $x_i$ to $x_j$ associated with the edge $e.$ We refer to this recorded relationship as an \emph{edge correspondence}.

As an illustrative example, let us consider a $3$-cycle consisting of vertices $z_0, z_1, z_2$. Using the constructed parameter homotopies, we track the path from $z_0$ to $z_1$ to obtain a solution $x_1$, establishing the correspondence along the first edge. Proceeding further, we track from $z_1$ to $z_2$ to find a solution $x_2$, and finally track back to $z_0$.
In this manner, the $3$-cycle defines a \emph{monodromy loop} that induces a permutation of the fiber $\pi^{-1}(z_0)$.
This implies that this procedure may yield a solution in $\pi^{-1}(z_0)$ distinct from the initial $x_0$. In such cases, the sequence of recorded edge correspondences allows us to explicitly reconstruct how the monodromy action was formed along the cycle. Consequently, as more points in the fiber are discovered, the permutations induced by the cycles of the homotopy graph generate the corresponding portion of the monodromy group.

In principle, recovering the full monodromy group does not require establishing all edge correspondences; it suffices to collect a set of loops whose induced permutations generate the group (see \Cref{sec:27-lines}). However, since the order and structure of the target group are often unknown \textit{a priori}, it is desirable to discover the complete fiber. A rigorous stopping criterion for checking fiber completeness is the multihomogeneous trace test \cite{leykin2018trace}. However, this stopping criterion can be inefficient, as it requires solving an auxiliary system with many solutions. In practice, a common heuristic is to terminate the computation once a prescribed number of successive monodromy loops fail to produce any new solutions in the fiber.

We say an edge $e=(u, v)$ in a homotopy graph is \emph{saturated} if every discovered solution in $\pi^{-1} (u)$ has been tracked to a solution in $\pi^{-1} (v)$ and vice-versa.
A homotopy graph $\mathcal{G}$ is said to be saturated if all of its edges are saturated. 
Assuming that all paths have been certifiably tracked using~\Cref{algo:certified_curve_tracking}, this implies that the number of solutions found at each vertex is some constant $k.$ 
Ideally, and often in practice, we find that $k=d$ is the true root count of the parametric system.
We emphasize that although a non-saturated graph may be preferable for the purposes of efficiently finding solutions, saturating a homotopy graph tends to increase probability of recovering the full monodromy group.

\subsection{Generating monodromy permutations from a homotopy graph}

Given a saturated homotopy graph with $k$ solutions at each vertex, we extract a monodromy action as follows: 
\begin{enumerate}
    \item For a chosen base vertex $z_0$, we construct a spanning tree $\mathcal{T}$ rooted at $z_0$.
    \item For each vertex $z$ in $\mathcal{G}$, composing the edge correspondences along the unique path in $\mathcal{T}$ from $z_0$ to $z$ yields a bijection 
\[
\phi_{\mathcal{T},z} : \pi^{-1}(z_0) \to \pi^{-1}(z).
\]
\item Each edge $e = (u,v)$ that does not belong to the spanning tree $\mathcal{T}$ closes a
unique fundamental cycle with $\mathcal{T}$, and each such cycle yields one monodromy permutation. The corresponding monodromy permutation on $\pi^{-1}(z_0)$ is
given by
\begin{equation}\label{eq:tree-permutations}
\sigma_{\mathcal{T}, e} = \phi_{\mathcal{T},v}^{-1} \circ \gamma_e \circ \phi_{\mathcal{T},u},
\end{equation}
where $\gamma_e : \pi^{-1}(u) \to \pi^{-1}(v)$ denotes the edge
correspondence associated with $e$.
\item The collection of permutations $\{\sigma_{\mathcal{T},e}\}$ obtained in this way, as $e$ ranges over all non-tree edges of $\mathcal{G}$, generates a permutation group encoded by the homotopy graph $\mathcal{G}.$
\end{enumerate}

We formalize this process in \Cref{algo:group_generation}.

\begin{algorithm}[ht]
	\caption{MonodromyGroup}
 \label{algo:group_generation}
\begin{algorithmic}[1]
\Require  
\begin{itemize}
    \item A saturated homotopy graph $\mathcal{G} = (V, E)$ with $k$ solutions and a base vertex $z_0 \in V$,
    \item for each edge $e=(u, v) \in E$, a bijective correspondence $\gamma_e: \pi^{-1}(u) \to \pi^{-1}(v)$.
\end{itemize}
\Ensure{Generators $S \subset S_k$ for the action of $\Mon_{\mathcal{G}, z_0} \subset \Mon_\pi $ on $\pi^{-1} (z_0)$}
\State Initialize the set of generators $S = \emptyset$.
\State Construct a spanning tree $\mathcal{T}=(V,E_\mathcal{T})$ of $\mathcal{G}$ rooted at $z_0$.
\State Identify the set of off-tree edges $E_{\text{off}} = E \, \setminus \, E_\mathcal{T}$.
\For{each edge $e = (u, v) \in E_{\text{off}}$}
    \State Find $\phi_{\mathcal{T},u}$ and $\phi_{\mathcal{T},v}$ along the unique $uv$-path in $\mathcal{T}$.
    \State Construct the permutation $\sigma_{\mathcal{T},e} = \phi_{\mathcal{T},v}^{-1} \circ \gamma_e \circ \phi_{\mathcal{T},u}$ on $\pi^{-1}(z_0)$ as in~\eqref{eq:tree-permutations}.
    \State $S= S \cup \{ \sigma_{\mathcal{T},e} \}$.
\EndFor
\State \Return $S$.
 \end{algorithmic}
 \end{algorithm}

The homotopy graph $\mathcal{G}$ encodes a transitive permutation group $\Mon_ {\mathcal{G}, z_0} \hookrightarrow S_k$, generated by all permutations induced by loops obtained from the infinite collection of all closed walks in $\mathcal{G}$ based at $z_0.$
The straightforward graph-theoretic lemma proven below identifies a finite generating set for this group, consisting all permutations $\sigma_{\mathcal{T},e}$ defined by~\eqref{eq:tree-permutations}.

\begin{lemma}
The permutations produced by~\Cref{algo:group_generation} generate the group $\Mon_ {\mathcal{G},z_0}$.
\end{lemma}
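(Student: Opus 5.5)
This is the standard fact that the fundamental group of a connected graph is free on the fundamental cycles of a spanning tree, transported along the monodromy action. Write $H = \langle \sigma_{\mathcal{T},e} : e \in E_{\text{off}}\rangle$.

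The inclusion $H \subseteq \Mon_{\mathcal{G},z_0}$ is immediate. For each off-tree edge $e=(u,v)$, consider the closed walk at $z_0$ that follows $\mathcal{T}$ from $z_0$ to $u$, traverses $e$, and returns along $\mathcal{T}$ from $v$ to $z_0$. The permutation induced by this walk is the composite of the edge correspondences along it. Traversing a tree edge backwards contributes the inverse correspondence, so the permutation is exactly $\phi_{\mathcal{T},v}^{-1}\circ\gamma_e\circ\phi_{\mathcal{T},u}=\sigma_{\mathcal{T},e}$. I will fix the convention that a walk induces the composite of its edge correspondences, with inverses for reversed traversals, and that concatenating walks corresponds to composing permutations; this is consistent with how $\Mon_{\mathcal{G},z_0}$ is defined.

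For the reverse inclusion, let $W=(z_0=w_0,e_1,w_1,\dots,e_r,w_r=z_0)$ be any closed walk, with each $e_i$ traversed in some direction. For each step, define $\tau_i = \phi_{\mathcal{T},w_i}^{-1}\circ\gamma_{e_i}^{\pm1}\circ\phi_{\mathcal{T},w_{i-1}}$, where the sign records the direction of traversal. Since $\phi_{\mathcal{T},z_0}=\mathrm{id}$ and the inner factors $\phi\phi^{-1}$ cancel, the product $\tau_r\circ\cdots\circ\tau_1$ telescopes to the permutation induced by $W$. So it suffices to show that each $\tau_i$ lies in $H$, and there are two cases. If $e_i\in E_{\mathcal{T}}$, then $\phi_{\mathcal{T},w_i}=\gamma_{e_i}^{\pm1}\circ\phi_{\mathcal{T},w_{i-1}}$, because the tree path to one endpoint extends the tree path to the other by exactly this edge. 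Hence $\tau_i=\mathrm{id}$. If $e_i\notin E_{\mathcal{T}}$, then $\tau_i=\sigma_{\mathcal{T},e_i}^{\pm1}$, which lies in $H$. Therefore every loop permutation lies in $H$, and so $\Mon_{\mathcal{G},z_0}\subseteq H$.

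The only step needing care is the tree-edge identity $\phi_{\mathcal{T},w_i}=\gamma_{e_i}^{\pm1}\circ\phi_{\mathcal{T},w_{i-1}}$. It relies on uniqueness of paths in the rooted tree $\mathcal{T}$: one of $w_{i-1},w_i$ is the parent of the other, so the path from $z_0$ to the child is the path to the parent followed by $e_i$. The identity also requires the $\gamma_e$ to be bijections, which is guaranteed by saturation with $k$ solutions at each vertex; this is what makes the inverses meaningful. Parallel edges cause no difficulty, because each edge is handled individually and only one edge of any parallel class can belong to $\mathcal{T}$.
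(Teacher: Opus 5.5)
Your proof is correct and follows essentially the paper's argument: the same telescoping insertion of $\phi_{\mathcal{T},w}^{-1}\circ\phi_{\mathcal{T},w}$ between consecutive edge correspondences, with tree edges contributing the identity and off-tree edges contributing $\sigma_{\mathcal{T},e}$. You additionally spell out the easy inclusion $\langle S\rangle\subseteq\Mon_{\mathcal{G},z_0}$ and the handling of reversed traversals (via $\gamma_e^{-1}$ and $\sigma_{\mathcal{T},e}^{-1}$), both of which the paper leaves implicit.
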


\begin{proof}
Consider any closed walk based at $z_0,$ traversing edges $e_1=(u_1, v_1), \dots , e_k=(u_k, v_k)$, where $u_{i+1}=v_i$ for $1\le i <k,$ and $u_1 = v_k = z_0.$
This walk encodes the permutation
\begin{equation}\label{eq:perm-decomp}
\gamma_{e_k} \circ \cdots \circ \gamma_{e_1} 
=
\left( 
\phi_{\mathcal{T},v_k}^{-1}\circ  \gamma_{e_k} \circ \phi_{\mathcal{T},u_k}
\right)
\circ \cdots 
\circ
\left( 
\phi_{\mathcal{T},v_1}^{-1}\circ  \gamma_{e_1}\circ  \phi_{\mathcal{T},u_1}
\right)
.
\end{equation}
For each $e_i$, there are two cases to consider:
\begin{enumerate}
    \item $e_i \notin E_\mathcal{T}$, in which case  $\phi_{\mathcal{T},v_i}^{-1}\circ  \gamma_{e_i} \circ \phi_{\mathcal{T},u_i}= \sigma_{\mathcal{T},e_i}$ as defined in~\Cref{algo:group_generation}, or 
    
    \item $e_i\in E_\mathcal{T},$ in which case $\phi_{\mathcal{T},v_i}^{-1}\circ  \gamma_{e_i} \circ \phi_{\mathcal{T},u_i}=\text{id}.$
\end{enumerate}
We conclude that the permutation~\eqref{eq:perm-decomp} lies in the group generated by all $\sigma_{\mathcal{T},e}$ with $e\notin E_\mathcal{T}.$
\end{proof}

\begin{remark}
We can adapt~\Cref{algo:group_generation} to work with non-saturated graphs by constructing the subgraph consisting of those edges whose correspondences are defined on all $k$ solutions, and then replacing $\mathcal{G}$ with the connected component of this subgraph containing $z_0$.
\end{remark}

\subsection{Certified monodromy computation}

Combining \Cref{algo:certified_curve_tracking} for certified tracking with the combinatorial permutation-extraction of~\Cref{algo:group_generation}, we obtain a complete algorithm for certified monodromy computation. 
\Cref{fig:main-figure} provides an illustration of this algorithm.
We refer to~\Cref{algo:certified_monodromy} for complete pseudocode, and the following statement of correctness.

\begin{thm}\label{thm:correctness}
    Under the hypotheses of Theorem~\ref{thm:Krawczyk-short} applied throughout the execution of Algorithm~\ref{algo:certified_monodromy}, the output set $S$ consists of permutations induced by loops in parameter space.
    The transitive action of $\langle S \rangle \hookrightarrow S_k$ coincides with the action of the group $\Mon_{\mathcal{G},z_0}$ on the $k$ known solutions at $z_0.$ 
    In particular, if $k=d$ is the generic root count, we have the relationships
    $$\langle S \rangle = \Mon_{\mathcal{G},z_0} \subset \Mon_\pi.$$ 
\end{thm}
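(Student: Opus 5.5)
The proof has three steps: show that each computed edge correspondence equals the true analytic continuation along its edge, transport this to fundamental cycles, and finish with the preceding lemma.

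\textbf{Step 1: edge correspondences are exact.} Fix an edge $e=(u,v)$ with path $p_e$ and a known solution $x\in\pi^{-1}(u)$. By hypothesis $p_e$ avoids the discriminant. So $x$ extends uniquely to an analytic path $x(t)$ with $f(x(t);p_e(t))=0$, and its endpoint $x(1)$ is the true image of $x$ under continuation along $p_e$.

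Algorithm~\ref{algo:certified_curve_tracking} produces times $0=t_0<\dots<t_N=1$ and boxes $I_i$ on which the interval Krawczyk test passed for the interval system $H(x;[t_{i-1},t_i])$. By Theorem~\ref{thm:Krawczyk-short}, applied with interval constants, for every $t^*\in[t_{i-1},t_i]$ the system $H(x;t^*)=0$ has a unique root in $I_i$. That root therefore depends continuously on $t^*$, by uniqueness and the implicit function theorem.

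Consecutive enclosures share the time $t_i$. After the Refine step (Algorithm~\ref{algo:meta_refine}) the new box again contains the unique root at time $t_i$, so it is the same root. Gluing over the finitely many subintervals gives a continuous path through the boxes. Since analytic continuation along a path avoiding the branch locus is unique, this path is $x(t)$, and the final certified output lies in the box containing $x(1)$.

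Hence the recorded $\gamma_e(x)$ is exactly the continuation endpoint. Solutions at a vertex are identified with the unique roots in their certified boxes, and distinct solutions have disjoint certified boxes. Therefore $\gamma_e$ is a well-defined bijection between the known solutions at $u$ and at $v$. Tracking in reverse gives the inverse bijection, so saturation makes $\gamma_e$ a bijection of $k$-element sets.

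\textbf{Step 2: generators come from loops.} For an off-tree edge $e$, the permutation $\sigma_{\mathcal{T},e}$ in \eqref{eq:tree-permutations} is the composite of correspondences along the closed walk tree-path$(z_0\to u)$, $e$, reverse tree-path$(v\to z_0)$. Concatenating the paths $p_{e'}$ along this walk gives a loop in $\CC^m$ based at $z_0$ that avoids the discriminant. By Step 1 and the functoriality of path lifting under concatenation, $\sigma_{\mathcal{T},e}$ is the monodromy of that loop restricted to the $k$ known solutions. This proves the first claim.

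\textbf{Step 3: identify the group.} The preceding lemma shows that the $\sigma_{\mathcal{T},e}$ generate $\Mon_{\mathcal{G},z_0}$, so $\langle S\rangle=\Mon_{\mathcal{G},z_0}$. Transitivity holds because saturated tracking starting from a single seed reaches all $k$ solutions; I would argue this via the discovery process (each solution was reached by some walk from $z_0$). This step is somewhat informal and needs care.

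If $k=d$, the known solutions are all of $\pi^{-1}(z_0)$. Every permutation induced by a closed walk is then the monodromy of an honest loop in $\CC^m\setminus\Delta$, so it lies in $\Mon_\pi$, which gives the inclusion $\Mon_{\mathcal{G},z_0}\subset\Mon_\pi$.

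The main obstacle is Step 1, in particular the gluing across step boundaries and the claim that no path jumping occurs. This rests on two points: uniqueness in the Krawczyk box holds for every $t^*$ in the interval simultaneously, and the refined box at $t_i$ still contains the same root. I would handle both by observing that refinement only shrinks or certifies boxes around a root already known to be unique in the previous enclosure.
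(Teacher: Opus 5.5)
Your proposal is correct and matches the paper's justification, which is only implicit: the paper gives no separate proof and relies on exactly your two ingredients. These are certified tracking, which makes each recorded $\gamma_e$ the true analytic continuation along $p_e$, and the spanning-tree lemma, which gives $\langle S\rangle=\Mon_{\mathcal{G},z_0}$; your discovery-process argument for transitivity is already rigorous, since $x_0$ is the only seed. For the gluing step, Refine does not only shrink boxes, since it moves the center and then doubles the radius; what preserves the previously certified root $x^*$ is the invariant $x^*\in x+\tilde r B$, because quasi-Newton steps with the old $A$ contract toward $x^*$, the halving test $\|AF(x)\|\le g\rho\tilde r$ with $g<1-\tau$ and $\rho\le\frac12$ forces $\|x-x^*\|\le\frac{g\rho\tilde r}{1-\tau}<\frac{\tilde r}{2}$, and doubling a certified box cannot change its unique root.
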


\begin{remark}\label{remark:saturated-action}
If $\mathcal{G}$ is not saturated, then~\Cref{thm:correctness} may be amended with the weaker conclusion that the action of $\langle S \rangle $ on the $k$ solutions coincides with the action of some subgroup of $\Mon_{\mathcal{G},z_0}$.
\end{remark}

We point out that, in general, $\Mon_{\mathcal{G},z_0}$ may not coincide with the full monodromy group $\Mon_\pi$. Certifying equality requires, for instance, verification that $k=d$, i.e.~the fiber of $z_0$ is fully recovered. In practice, one often relies on heuristic evidence: if the group $\Mon_{\mathcal{G},z_0}$ stabilizes as the graph $\mathcal{G}$ grows, this may provide some support for the claim that the recovered group $\langle S \rangle $ equals $\Mon_\pi$.

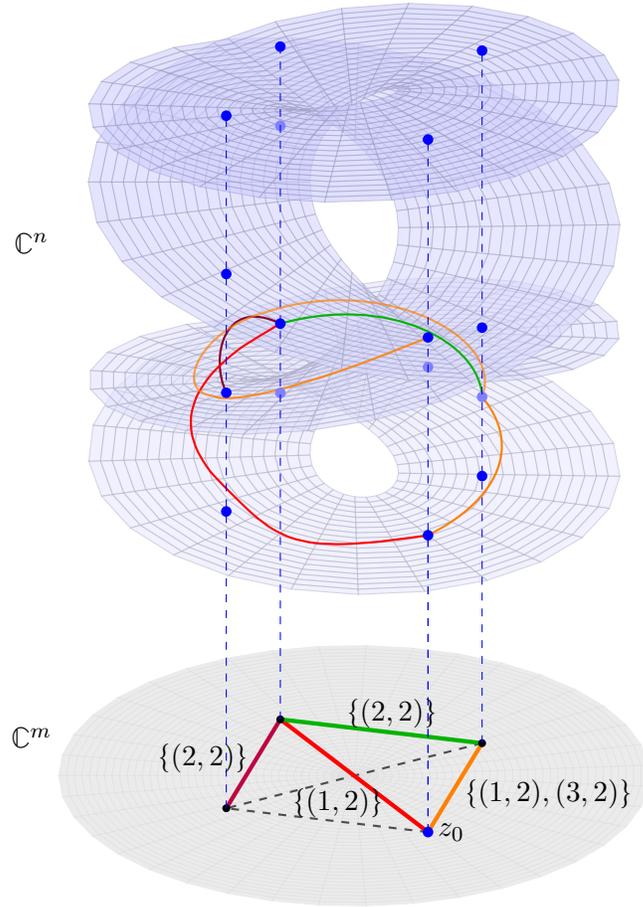
\begin{figure}
    \centering
\input{figure}
\caption{Illustration of certified monodromy computation for a $4$-to-$1$ projection $\pi:X\to\mathbb{C}^m$. The gray circle depicts the parameter space $\mathbb{C}^m$ with a homotopy graph (the base vertex $z_0$ is colored in blue). For each edge (a dashed line), certified path tracking lifts the parameter path to a curve on the incidence variety $X\subset \mathbb{C}^m\times\mathbb{C}^n$. This produces an edge correspondence, which is recorded to obtain monodromy permutations. Edges of the homotopy graph are colored once their correspondences have been computed; the colored curves indicate the corresponding certified lifts.}
    \label{fig:main-figure}
\end{figure}

\begin{algorithm}[ht]
\caption{CertifiedMonodromyComputation}
\label{algo:certified_monodromy}
\begin{algorithmic}[1]
\Require
\begin{itemize}
    \item A system $F(x;z)$ locally defining a solution variety $X \subset \mathbb{C}^m\times\mathbb{C}^n$,
    \item a fabricated generic pair $(z_0,x_0)\in X$.
    \item A connected multigraph $\mathcal{G}$ with  $V(\mathcal{G}) = \{z_0,\dots,z_\ell\}\subset  \CC^m$ and paths $p_e(t)$ for all $e\in E(\mathcal{G})$
\end{itemize}
\Ensure{Generators $S \subset S_k$ for the action of $\Mon_{\mathcal{G}, z_0} \subset \Mon_\pi $ on $\pi^{-1} (z_0)$.}

\While{$\mathcal{G}$ is not saturated}
    \For{each edge $e=(u,v)$ in $\mathcal{G}$}
        \If{$e$ is not saturated}
            \State  \text{CertifiedTrack}  to transport all known solutions at $u$ to $v$ along $e$. \Comment{\Cref{algo:certified_curve_tracking}}
            \State Record each resulting edge correspondence at $e$.
        \EndIf
    \EndFor
\EndWhile
\State Set $S = \text{MonodromyGroup}(\mathcal{G})$. \Comment{\Cref{algo:group_generation}}

\State \Return $S$.
\end{algorithmic}
\end{algorithm}

\begin{remark}\label{rem:compute-monodromy-reducible}
We may  extend~\Cref{algo:certified_monodromy} to the case where $X$ has multiple irreducible components dominating the parameter space if a fabricated pair is provided for each of these components.
\end{remark}

Even in scenarios where~\Cref{algo:certified_monodromy} fails to recover all $d$ solutions, we may still obtain useful information about the full Galois/monodromy group $\Mon_\pi $. 
To illustrate this, we recall the notion of \emph{Galois width} introduced in~\cite{duff2025galois}.
For any finite group $G,$ the invariant $\gw (G)$ provides a measure of the structural complexity of $G$, and may be defined as follows:
\begin{equation}\label{eq:gw}
\gw(G):=\min_{\substack{\text{unrefinable subgroup chains}\\id=H_m\leq \cdots \leq H_0=G}}\left(\max_{0\leq i<m}[H_i:H_{i+1}]\right).
\end{equation}
We now observe that the Galois width is monotone with respect to~\Cref{algo:certified_monodromy}'s output.

\begin{proposition}\label{prop:galois-width}
For any homotopy graph $\mathcal{G},$ we have $\gw (\Mon_{\mathcal{G},z_0} )\le \gw(\Mon_\pi)$.
\end{proposition}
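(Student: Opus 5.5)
The plan is to reduce \Cref{prop:galois-width} to two purely group-theoretic monotonicity properties of $\gw$: passing to subgroups, and passing to quotients. First I will show that $\Mon_{\mathcal{G},z_0}$ is a quotient of a subgroup of $\Mon_\pi$.

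\textbf{Step 1: $\Mon_{\mathcal{G},z_0}$ is a subquotient of $\Mon_\pi$.} Every closed walk in $\mathcal{G}$ based at $z_0$ concatenates the paths $p_e(t)$, each of which avoids the discriminant. So the walk is a loop in parameter space, and analytic continuation along it permutes the full fiber $\pi^{-1}(z_0)$ of size $d$. The set of these permutations is a subgroup $H\le \Mon_\pi$, since concatenation and reversal of closed walks give products and inverses. The $k$ known solutions at $z_0$ form an $H$-invariant subset $Y\subseteq\pi^{-1}(z_0)$: the edge correspondences of $\mathcal{G}$ are restrictions of the continuation maps, and saturation means each such map sends known solutions to known solutions. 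Restriction to $Y$ is therefore a homomorphism $H\to \mathrm{Sym}(Y)$. By definition its image is $\Mon_{\mathcal{G},z_0}$, so $\Mon_{\mathcal{G},z_0}\cong H/N$ for a normal subgroup $N\trianglelefteq H$. Since $\gw$ depends only on the abstract group, it suffices to prove that $\gw(H/N)\le \gw(H)\le\gw(G)$ for any finite group $G=\Mon_\pi$, subgroup $H\le G$, and normal subgroup $N\trianglelefteq H$.

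\textbf{Step 2: refining chains.} The definition~\eqref{eq:gw} takes a minimum over unrefinable chains, but I will work with arbitrary chains $id=K_m\le\cdots\le K_0=K$, allowing repeated terms. Any such chain can be refined to an unrefinable one: delete repetitions, then insert intermediate subgroups until no further insertion is possible. Indices are multiplicative along a refinement, so each new step has index at most that of the step it subdivides. Hence the maximum index does not increase, and $\gw(K)$ is at most the maximal index of any chain from $id$ to $K$.

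\textbf{Step 3: subgroups and quotients.} Let $id=G_m\le\cdots\le G_0=G$ be an unrefinable chain realizing $\gw(G)$.

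For the subgroup case, consider the chain $H\cap G_i$. The map $(H\cap G_i)/(H\cap G_{i+1})\to G_i/G_{i+1}$ on cosets is injective, so $[H\cap G_i:H\cap G_{i+1}]\le[G_i:G_{i+1}]$. By Step 2 this gives $\gw(H)\le\gw(G)$.

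For the quotient case, take an optimal chain $H_i$ of $H$ and consider the images $H_iN/N$. The cosets of $H_{i+1}N$ in $H_iN$ are represented by elements of $H_i$, and two elements of $H_i$ in the same $H_{i+1}$-coset lie in the same $H_{i+1}N$-coset. So $[H_iN:H_{i+1}N]\le[H_i:H_{i+1}]$, and Step 2 again gives $\gw(H/N)\le\gw(H)$.

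Combining the steps yields the proposition. The group theory is routine. The step needing the most care is Step 1, namely justifying that the action on the $k$ known solutions is a genuine restriction of an action of a subgroup of $\Mon_\pi$. The key point is that closed walks never leave the invariant set $Y$. This is exactly where saturation and certified tracking (\Cref{thm:correctness}) are used.
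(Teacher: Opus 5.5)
Your proposal is correct and follows the paper's proof. The paper likewise realizes $\Mon_{\mathcal{G},z_0}$ as the restriction of a subgroup $H\subset\Mon_\pi$ to the orbit of $x_0$, then cites \cite[Theorem 4]{duff2025galois} for subgroups and \cite[Proposition 3]{duff2025galois} for homomorphic images, which you instead prove directly and correctly via intersected chains, pushed-forward chains, and refinement. One simplification: if you take $Y$ to be the $H$-orbit of $x_0$, as the paper does, then $H$-invariance is automatic, so neither saturation nor certified tracking is needed and the statement holds for any $\mathcal{G}$.
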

\begin{proof}
As a permutation group, $\Mon_{\mathcal{G},z_0}$ 
acts as the restriction of some subgroup
$H\subset \Mon_\pi$ to its orbit of $k$ solutions connected to $x_0\in\pi^{-1} (z_0)$ through $\mathcal{G}.$
Thus,
$$\gw (\Mon_{\mathcal{G},z_0}) \le \gw (H)\le \gw(\Mon_\pi),$$ where the second inequality follows from~\cite[Theorem 4]{duff2025galois} and, since $\Mon_{\mathcal{G},z_0}$ is a homomorphic image of $H$, the first inequality follows by~\cite[Proposition 3]{duff2025galois}.
\end{proof}

\section{Experiments}\label{sec:experiments}

We have implemented our certified tracking and homotopy graph framework in the \texttt{Julia} programming language. 
Unless otherwise indicated, the homotopy graph in~\Cref{algo:certified_monodromy} is always a complete graph on vertices $v_0, \dots , v_\ell $, whose coordinates are drawn uniformly at random from the complex unit circle, and edges between these vertices are simply straight-line segments.
For interval arithmetic, we use the package \texttt{Nemo.jl} \cite{fieker2017nemo}, which wraps the \texttt{Arb} library~\cite{johansson2017arb}. 
Real and imaginary parts of complex intervals have the form $[x - r, x + r]$, where the radius $r$ has a $30$-bit mantissa, the mantissa of midpoint $x$ has a user-defined precision, and both radius and midpoint have arbitrary-precision exponents.
To ensure that all path-tracking is failure-free, we have fixed $256$ bits of precision throughout all experiments. For many examples, however, much less precision is needed. 
For the parameters of~\Cref{algo:meta_refine,algo:certified_curve_tracking}, we choose the same values used in \cite{guillemot2024validated}, namely $\rho=\frac{1}{8}, \tau=\frac{7}{8},g=\frac{1}{64},h=r=\frac{1}{10},\delta_{inc}=\frac{5}{4}$, and $\delta_{dec}=\frac{1}{2}$, as well as the Hermite predictor $X(\eta)$ defined below. 
An advantage of this predictor is that it exploits information from the previous step. Specifically, let $x$ be the current point (at $\eta=0$) and let $x_{prev}$ be the previous point (at $\eta=-h_{prev}$) obtained during tracking. Let $v$ and $v_{prev}$ denote the corresponding tangent vectors. The \emph{Hermite predictor} $X(\eta)$ is the unique cubic polynomial satisfying
$X(0)=x$, $X(-h_{prev})=x_{prev}$, $X'(0)=v$, and $X'(-h_{prev})=v_{prev}$, 
and is given explicitly (cf.~\cite[\S 6.2]{guillemot2024validated}) by the formula
\begin{equation}\label{eq:hermite-predictor}
X(\eta)=x+v\eta+\left(\frac{2v+v_{prev}}{h_{prev}}-\frac{3(x-x_{prev})}{h_{prev}^2}\right)\eta^2+\left(\frac{v+v_{prev}}{h_{prev}^2}-\frac{2(x-x_{prev})}{h_{prev}^3}\right)\eta^3.
\end{equation}

The first subsection describes a series of experiments conducted on a suite of univariate examples, the results of which are compiled in~\Cref{tab:cert}.
\Cref{tab:results} summarizes experiments described in subsequent subsections.
All code and data are available at the following url:

\begin{center}
\url{https://github.com/klee669/certified_homotopy_tracking}
\end{center}

\begin{table}[t]
    \centering
    \small
\begin{tabular}{l c c l c c}
    \toprule
    Problem & \# variables & \# parameters & Galois group & \# roots & Galois width \\
    \midrule
    Belyi function,~\ref{subsec:mathieu} & $1$ & $1$ & $M_{23}$ & $23$ & $23$ \\
    Symmetric $27$ lines, ~\ref{sec:27-lines} & $8$ & $2$ & $S_2 \times S_2$ & $27$ & $2$ \\
    Nearest-point on a surface,~\ref{subsec:ed-degree} & $4$ & $3$ & $S_4 \wr S_2$ & $8$ & $3$ \\
    3-point absolute pose (P3P),~\ref{subsec:p3p} & $3$ & $15$ & $(S_2 \wr S_4) \cap A_8$ & $8$ & $3$ \\
    $5$-point relative pose,~\ref{subsec:5pp} & $3$ & $20$ & $(S_2 \wr S_{10}) \cap A_{20}$ & $20$ & $10$ \\
    \bottomrule
\end{tabular}

\caption{Summary of experiments in Subsections~\ref{subsec:mathieu}--\ref{subsec:5pp}.}
\label{tab:results} \end{table}

\subsection{Simple univariate families}\label{subsec:univariate}

\begin{table}
    \centering
    \small
    \begin{tabular}{l *{4}{r@{\,}>{\scriptsize}l} @{\hspace{1.5em}} l *{4}{r@{\,}>{\scriptsize}l}}
        \toprule
        \multicolumn{9}{l}{\textbf{Generic coefficients degree: 3}} & \multicolumn{9}{l}{\textbf{Generic coefficients degree: 4}} \\
        \multicolumn{9}{l}{$\text{Mon}_\pi = S_3$, Galois width: 3} & \multicolumn{9}{l}{$\text{Mon}_\pi = S_4$, Galois width: 3} \\
        \cmidrule(r){1-9} \cmidrule(l){10-18}
        \# nodes & \multicolumn{2}{c}{3} & \multicolumn{2}{c}{4} & \multicolumn{2}{c}{5} & \multicolumn{2}{c}{6} & \# nodes & \multicolumn{2}{c}{3} & \multicolumn{2}{c}{4} & \multicolumn{2}{c}{5} & \multicolumn{2}{c}{6} \\
        \midrule
        $\text{Mon}_\pi = \text{Mon}_{\mathcal{G}}$ & 0 & \% & 49 & \% & 78 & \% & 87 & \% & $\text{Mon}_\pi = \text{Mon}_{\mathcal{G}}$ & 0 & \% & 55 & \% & 77 & \% & 94 & \% \\
        $\text{Mon}_{\mathcal{G}} \transitive \pi^{-1}(p)$ & 23 & \% & 68 & \% & 87 & \% & 91 & \% & $\text{Mon}_{\mathcal{G}} \transitive \pi^{-1}(p)$ & 4 & \% & 63 & \% & 78 & \% & 94 & \% \\
        $\text{gw}(\text{Mon}_\pi) = \text{gw}(\text{Mon}_{\mathcal{G}})$ & 23 & \% & 68 & \% & 87 & \% & 91 & \% & $\text{gw}(\text{Mon}_\pi) = \text{gw}(\text{Mon}_{\mathcal{G}})$ & 10 & \% & 74 & \% & 86 & \% & 96 & \% \\
        
        \addlinespace[1.5em]
        
        \multicolumn{9}{l}{\textbf{Generic coefficients degree: 5}} & \multicolumn{9}{l}{\textbf{Generic coefficients degree: 6}} \\
        \multicolumn{9}{l}{$\text{Mon}_\pi = S_5$, Galois width: 5} & \multicolumn{9}{l}{$\text{Mon}_\pi = S_6$, Galois width: 6} \\
        \cmidrule(r){1-9} \cmidrule(l){10-18}
        \# nodes & \multicolumn{2}{c}{3} & \multicolumn{2}{c}{4} & \multicolumn{2}{c}{5} & \multicolumn{2}{c}{6} & \# nodes & \multicolumn{2}{c}{3} & \multicolumn{2}{c}{4} & \multicolumn{2}{c}{5} & \multicolumn{2}{c}{6} \\
        \midrule
        $\text{Mon}_\pi = \text{Mon}_{\mathcal{G}}$ & 0 & \% & 42 & \% & 83 & \% & 91 & \% & $\text{Mon}_\pi = \text{Mon}_{\mathcal{G}}$ & 0 & \% & 29 & \% & 75 & \% & 91 & \% \\
        $\text{Mon}_{\mathcal{G}} \transitive \pi^{-1}(p)$ & 3 & \% & 46 & \% & 85 & \% & 91 & \% & $\text{Mon}_{\mathcal{G}} \transitive \pi^{-1}(p)$ & 4 & \% & 36 & \% & 76 & \% & 93 & \% \\
        $\text{gw}(\text{Mon}_\pi) = \text{gw}(\text{Mon}_{\mathcal{G}})$ & 4 & \% & 46 & \% & 85 & \% & 91 & \% & $\text{gw}(\text{Mon}_\pi) = \text{gw}(\text{Mon}_{\mathcal{G}})$ & 0 & \% & 33 & \% & 76 & \% & 93 & \% \\

        \addlinespace[1.5em]
        
        \multicolumn{9}{l}{\textbf{Generic coefficients degree: 7}} & \multicolumn{9}{l}{\textbf{Sum of even degree terms: Deg 4}} \\
        \multicolumn{9}{l}{$\text{Mon}_\pi = S_7$, Galois width: 7} & \multicolumn{9}{l}{$\text{Mon}_\pi = S_2 \wr S_2$, Galois width: 2} \\
        \cmidrule(r){1-9} \cmidrule(l){10-18}
        \# nodes & \multicolumn{2}{c}{3} & \multicolumn{2}{c}{4} & \multicolumn{2}{c}{5} & \multicolumn{2}{c}{6} & \# nodes & \multicolumn{2}{c}{3} & \multicolumn{2}{c}{4} & \multicolumn{2}{c}{5} & \multicolumn{2}{c}{6} \\
        \midrule
        $\text{Mon}_\pi = \text{Mon}_{\mathcal{G}}$ & 0 & \% & 23 & \% & 74 & \% & 95 & \% & $\text{Mon}_\pi = \text{Mon}_{\mathcal{G}}$ & 0 & \% & 18 & \% & 43 & \% & 64 & \% \\
        $\text{Mon}_{\mathcal{G}} \transitive \pi^{-1}(p)$ & 2 & \% & 28 & \% & 76 & \% & 96 & \% & $\text{Mon}_{\mathcal{G}} \transitive \pi^{-1}(p)$ & 27 & \% & 62 & \% & 89 & \% & 96 & \% \\
        $\text{gw}(\text{Mon}_\pi) = \text{gw}(\text{Mon}_{\mathcal{G}})$ & 2 & \% & 28 & \% & 76 & \% & 96 & \% & $\text{gw}(\text{Mon}_\pi) = \text{gw}(\text{Mon}_{\mathcal{G}})$ & 47 & \% & 75 & \% & 96 & \% & 98 & \% \\

        \addlinespace[1.5em]

        \multicolumn{9}{l}{\textbf{Sum of even degree terms: Deg 6}} & \multicolumn{9}{l}{\textbf{Sum of even degree terms: Deg 8}} \\
        \multicolumn{9}{l}{$\text{Mon}_\pi = S_2 \wr S_3$, Galois width: 3} & \multicolumn{9}{l}{$\text{Mon}_\pi = S_2 \wr S_4$, Galois width: 3} \\
        \cmidrule(r){1-9} \cmidrule(l){10-18}
        \# nodes & \multicolumn{2}{c}{3} & \multicolumn{2}{c}{4} & \multicolumn{2}{c}{5} & \multicolumn{2}{c}{6} & \# nodes & \multicolumn{2}{c}{3} & \multicolumn{2}{c}{4} & \multicolumn{2}{c}{5} & \multicolumn{2}{c}{6} \\
        \midrule
        $\text{Mon}_\pi = \text{Mon}_{\mathcal{G}}$ & 0 & \% & 37 & \% & 73 & \% & 95 & \% & $\text{Mon}_\pi = \text{Mon}_{\mathcal{G}}$ & 0 & \% & 32 & \% & 67 & \% & 95 & \% \\
        $\text{Mon}_{\mathcal{G}} \transitive \pi^{-1}(p)$ & 24 & \% & 56 & \% & 83 & \% & 97 & \% & $\text{Mon}_{\mathcal{G}} \transitive \pi^{-1}(p)$ & 2 & \% & 49 & \% & 70 & \% & 95 & \% \\
        $\text{gw}(\text{Mon}_\pi) = \text{gw}(\text{Mon}_{\mathcal{G}})$ & 27 & \% & 60 & \% & 86 & \% & 99 & \% & $\text{gw}(\text{Mon}_\pi) = \text{gw}(\text{Mon}_{\mathcal{G}})$ & 11 & \% & 83 & \% & 82 & \% & 100 & \% \\
        
        \addlinespace[1.5em]
        
        \multicolumn{9}{l}{\textbf{Even degree (squared coeffs): Deg 4}} & \multicolumn{9}{l}{\textbf{Even degree (squared coeffs): Deg 6}} \\
        \multicolumn{9}{l}{$\text{Mon}_\pi = S_2 \rtimes S_2$, Galois width: 2} & \multicolumn{9}{l}{$\text{Mon}_\pi = S_2^{2} \rtimes S_3$, Galois width: 3} \\
        \cmidrule(r){1-9} \cmidrule(l){10-18}
        \# nodes & \multicolumn{2}{c}{3} & \multicolumn{2}{c}{4} & \multicolumn{2}{c}{5} & \multicolumn{2}{c}{6} & \# nodes & \multicolumn{2}{c}{3} & \multicolumn{2}{c}{4} & \multicolumn{2}{c}{5} & \multicolumn{2}{c}{6} \\
        \midrule
        $\text{Mon}_\pi = \text{Mon}_{\mathcal{G}}$ & 0 & \% & 30 & \% & 68 & \% & 89 & \% & $\text{Mon}_\pi = \text{Mon}_{\mathcal{G}}$ & 0 & \% & 48 & \% & 82 & \% & 96 & \% \\
        $\text{Mon}_{\mathcal{G}} \transitive \pi^{-1}(p)$ & 0 & \% & 30 & \% & 68 & \% & 89 & \% & $\text{Mon}_{\mathcal{G}} \transitive \pi^{-1}(p)$ & 0 & \% & 51 & \% & 82 & \% & 96 & \% \\
        $\text{gw}(\text{Mon}_\pi) = \text{gw}(\text{Mon}_{\mathcal{G}})$ & 56 & \% & 85 & \% & 98 & \% & 100 & \% & $\text{gw}(\text{Mon}_\pi) = \text{gw}(\text{Mon}_{\mathcal{G}})$ & 28 & \% & 80 & \% & 94 & \% & 97 & \% \\

        \addlinespace[1.5em]

        \multicolumn{9}{l}{\textbf{Even degree (squared coeffs): Deg 8}} & \multicolumn{9}{l}{\textbf{Palindrome: Deg 4}} \\
        \multicolumn{9}{l}{$\text{Mon}_\pi = S_2^3 \rtimes S_4$, Galois width: 3} & \multicolumn{9}{l}{$\text{Mon}_\pi = S_2 \wr S_2$, Galois width: 2} \\
        \cmidrule(r){1-9} \cmidrule(l){10-18}
        \# nodes & \multicolumn{2}{c}{3} & \multicolumn{2}{c}{4} & \multicolumn{2}{c}{5} & \multicolumn{2}{c}{6} & \# nodes & \multicolumn{2}{c}{3} & \multicolumn{2}{c}{4} & \multicolumn{2}{c}{5} & \multicolumn{2}{c}{6} \\
        \midrule
        $\text{Mon}_\pi = \text{Mon}_{\mathcal{G}}$ & 0 & \% & 35 & \% & 83 & \% & 97 & \% & $\text{Mon}_\pi = \text{Mon}_{\mathcal{G}}$ & 0 & \% & 47 & \% & 74 & \% & 90 & \% \\
        $\text{Mon}_{\mathcal{G}} \transitive \pi^{-1}(p)$ & 0 & \% & 54 & \% & 87 & \% & 97 & \% & $\text{Mon}_{\mathcal{G}} \transitive \pi^{-1}(p)$ & 10 & \% & 61 & \% & 68 & \% & 93 & \% \\
        $\text{gw}(\text{Mon}_\pi) = \text{gw}(\text{Mon}_{\mathcal{G}})$ & 27 & \% & 81 & \% & 98 & \% & 99 & \% & $\text{gw}(\text{Mon}_\pi) = \text{gw}(\text{Mon}_{\mathcal{G}})$ & 53 & \% & 89 & \% & 97 & \% & 99 & \% \\

        \addlinespace[1.5em]

        \multicolumn{9}{l}{\textbf{Palindrome: Deg 6}} & \multicolumn{9}{l}{\textbf{Palindrome: Deg 8}} \\
        \multicolumn{9}{l}{$\text{Mon}_\pi = S_2 \wr S_3 $, Galois width: 3} & \multicolumn{9}{l}{$\text{Mon}_\pi = S_2 \wr S_4$, Galois width: 3} \\
        \cmidrule(r){1-9} \cmidrule(l){10-18}
        \# nodes & \multicolumn{2}{c}{3} & \multicolumn{2}{c}{4} & \multicolumn{2}{c}{5} & \multicolumn{2}{c}{6} & \# nodes & \multicolumn{2}{c}{3} & \multicolumn{2}{c}{4} & \multicolumn{2}{c}{5} & \multicolumn{2}{c}{6} \\
        \midrule
        $\text{Mon}_\pi = \text{Mon}_{\mathcal{G}}$ & 0 & \% & 34 & \% & 56 & \% & 85 & \% & $\text{Mon}_\pi = \text{Mon}_{\mathcal{G}}$ & 0 & \% & 21 & \% & 66 & \% & 84 & \% \\
        $\text{Mon}_{\mathcal{G}} \transitive \pi^{-1}(p)$ & 7 & \% & 52 & \% & 82 & \% & 91 & \% & $\text{Mon}_{\mathcal{G}} \transitive \pi^{-1}(p)$ & 4 & \% & 44 & \% & 78 & \% & 87 & \% \\
        $\text{gw}(\text{Mon}_\pi) = \text{gw}(\text{Mon}_{\mathcal{G}})$ & 22 & \% & 67 & \% & 77 & \% & 93 & \% & $\text{gw}(\text{Mon}_\pi) = \text{gw}(\text{Mon}_{\mathcal{G}})$ & 14 & \% & 65 & \% & 89 & \% & 97 & \% \\

        \bottomrule
    \end{tabular}
    \caption{Experimental results for univariate families in degrees 3--8.}
    \label{tab:cert}
\end{table}

We first consider several univariate benchmarks of small degrees. In each case, the parameters are the polynomial coefficients, subject to the appropriate symmetry or sparsity constraints, and the monodromy action is transitive. The cases are:

\begin{itemize}
    \item \textbf{Generic coefficients:} $\sum_{i=0}^dc_ix^i$, with full-symmetric monodromy, $\Mon_\pi \cong S_d$,
    \item \textbf{Even degree terms:} $\sum_{i=0}^{d} c_{2i} x^{2i}$, with imprimitive monodromy $\Mon_\pi \cong S_2 \wr S_d \cong S_2^{d} \rtimes S_d$,
    \item \textbf{Even degree terms + squared coefficients:} $\sum_{i=0}^{d} (c_{2i} x^{i})^2$, with $\Mon_\pi \cong S_2^{d-1} \rtimes S_d$,
    \item \textbf{Palindrome:}  $\sum_{i=0}^{d} c_i x^i$ where $c_i = c_{d-i}$, with $\Mon_\pi \cong S_2 \wr S_d $.
\end{itemize}

For each of the cases listed above, and various $2\le d \le 8,$ we constructed complete homotopy graphs on $k=3,4,5,6$ vertices and ran~\Cref{algo:certified_monodromy} over $100$ trials. Our goal was to understand how the graph size and group properties influenced complete or partial recovery.

The results are summarized in~\Cref{tab:cert}. We report three metrics:
\begin{itemize}
    \item $\Mon_\pi = \Mon_{\mathcal{G}}$: The percentage of cases recovering the exact Galois/monodromy group.
    \item $\Mon_{\mathcal{G}} \transitive \pi^{-1} (p)$: The percentage of cases recovering the exact root count.
    \item $\gw(\Mon_\pi ) = \gw(\Mon_{\mathcal{G}})$: The percentage of cases recovering the exact Galois width.
\end{itemize}

Several trends can be observed from the data. 

First, as expected, the success rates for all metrics generally increase with the number of vertices, confirming that denser homotopy graphs tend to recover group properties more reliably.

Second, higher-degree polynomials generally require more nodes to achieve high success rates. 
For the case of generic coefficients, this runs contrary to the probabilistic heuristic given in~\cite{duff2019solving}, suggesting that only a constant number of loops may be needed as $d\to \infty.$
Our experimental results are partly a symptom of working with small values of $d.$
For multivariate systems with many solutions and full-symmetric monodromy, we can often achieve full (albeit non-certified) monodromy recovery after encircling just a few branch points.

Last, but not least, we observe that the success rate for Galois width recovery on the three imprimitive cases often exceeds the success rates for both transitivity and exact group recovery.
In particular, on sparse graphs the method frequently identifies the correct width even without finding all solutions. This highlights the practical value of Galois width, which we can use to measure the intrinsic complexity of a problem well before its solution space has been fully explored.

\subsection{A Belyi function for the Mathieu group}\label{subsec:mathieu}

\begin{figure}[h]
\begin{tikzpicture}[
    vertex/.style={circle, draw=black, fill=black, inner sep=1.5pt, minimum size=4pt},
    branch/.style={circle, fill=red, inner sep=1.5pt, minimum size=4pt},
    thick_edge/.style={thick, draw=black}
]
    \draw[dashed, gray!60] (-3.5, 0) -- (3.5, 0);
    \draw[dashed, gray!60] (0,-3) -- (0,3);
    \node[gray!60] (rez) at (4.25,0)  {$\operatorname{Im} (z)=0$}; 
        \node[gray!60] (rez) at (1, 2)  {$\operatorname{Re} (z)=1/2$}; 
    \node[vertex, label=below:$v_0$] (v0) at (0,0) {};
    \node[vertex, label=above:$v_3$] (v3) at (30:3) {};
    \node[vertex, label=below:$v_4$] (v4) at (-30:3) {};
    \node[vertex, label=above:$v_1$] (v1) at (150:3) {};
    \node[vertex, label=below:$v_2$] (v2) at (210:3) {};
    \draw[thick_edge] (v0) -- (v1) -- (v2) -- (v0);
    \draw[thick_edge] (v0) -- (v3) -- (v4) -- (v0);
    \node[branch, label={[text=red]above:$z=0$}] (b_left) at ({-3/sqrt(3)}, 0) {};
    \node[branch, label={[text=red]above:$z=1$}] (b_right) at ({3/sqrt(3)}, 0) {};
\end{tikzpicture}
\caption{Homotopy graph $\mathcal{G}$ encircling branch points $z\in \{ 0,1\}$ for the Belyi family $f(x;g)=z$ defined by~\eqref{eq:mathieu-polynomials}, with $g$ satisfying~\eqref{eq:g-poly}.}
\label{fig:mathieu-data}
\end{figure}
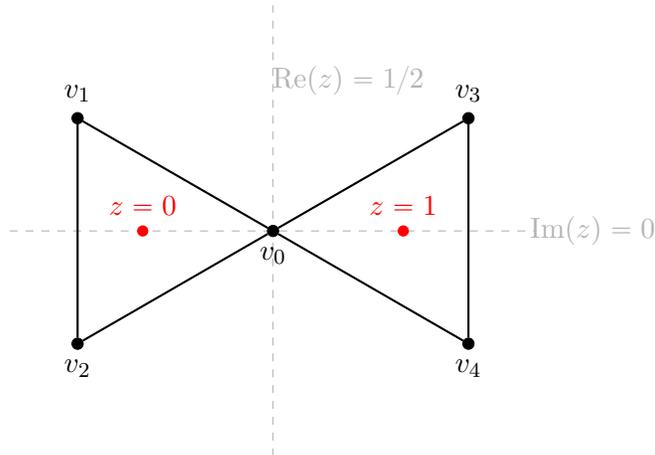

As an additional univariate example, we revisit a degree-$23$ Belyi polynomial constructed by Elkies~\cite{elkies2013complex} whose Galois/monodromy group is the sporadic Mathieu group $M_{23}.$
This example was used to complete a classification begun by M\"{u}ller~\cite{muller1995primitive} of all polynomials $f(x)-z\in \CC [x,z]$ with primitive Galois groups over $\mathbb{C}(z).$
The \emph{inverse Galois problem over $\mathbb{Q}$} provides further motivation for the study of this example. The group $M_{23}$ is the last of the sporadic finite simple groups for which this problem has not yet been solved in the affirmative. 
Also, as of recent work~\cite{van202417t7}, $M_{23}$ is the only permutation group of degree $\le 23$ for which an explicit solution to this problem has not been constructed.

For completeness, we reproduce Elkies' equations below:
\begin{align}
\tau &= (2^{38}  3^{17}/23^3)  (47323 G^3 - 1084897 G^2 + 7751 G - 711002),\nonumber   \\
f_2 (x;G) &= (8 G^3 + 16   G^2 - 20 G + 20)   x^2 - (7 G^3 + 17 G^2 - 7 G + 76) x \nonumber \\
  &\phantom{=}     + (-13 G^3 + 25 G^2 - 107 G + 596),\nonumber \\
f_3 (x;G) &= 8 (31 G^3 + 405 G^2 - 459 G + 333)   x^3 + (941 G^3 + 1303   G^2 -1853 G + 1772) x  \nonumber \\
  &\phantom{=} + (85 G^3 - 385 G^2 + 395 G - 220),\nonumber \\
f_4 (x;G) &= 32   (4 G^3 - 69   G^2 + 74 G - 49)   x^4 + 32 (21 G^3 + 53 G^2 - 68 G + 58) x^3 \nonumber \\
  &\phantom{=} -8 (97 G^3+95 G^2-145 G+148) x^2 + 8   (41 G^3 - 89 G^2-G + 140) x \nonumber \\
  &\phantom{=} +(-123 G^3+391 G^2-93 G+3228),\nonumber \\
f(x;G) &= \tau^{-1} f_2^2(x) f_3 (x) f_4^4 (x) ,\nonumber  \\
\label{eq:mathieu-polynomials}
\end{align}
where we have provided our own normalization of the polynomial $f(x;G)$, and $G$ is the interval
\begin{equation}\label{eq:g-interval}
G = [0.549472304541888876273331 \, \pm \, r ] + i [0.67565033576789561392800491 \,  \pm \, r], 
\quad 
r = 10^{-30},
\end{equation}
which contains a degree-$4$ algebraic number $g$ satisfying
\begin{equation}\label{eq:g-poly}
g^4 + g^3 + 9g^2 - 10g + 8 = 0.
\end{equation}
Our normalization is chosen so that branch points of the \emph{Belyi map} $\CC \ni x \mapsto f(x;g) \in \CC$ are at $z=0$ and $z=1.$ These branch points are visualized in~\Cref{fig:mathieu-data}, along with a bowtie-shaped homotopy graph $\mathcal{G}$ which we used to certify Elkies' result.
As we vary $z$ along the edges of the graph, we obtain a sequence of intervals which contain solution curves $x_g (z)$ satisfying $f(x_g(z))=z$ for all $g\in G.$
For values $g\in G$ not satisfying~\eqref{eq:g-poly}, the branch point $z=1$ splits into clusters of different nearby branch points; for these values, we have merely certified $M_{23}$ as a subgroup of the full Galois/monodromy group.
However, when $g$ does satisfy the equation~\eqref{eq:g-poly}, we have certified the monodromy group of the Belyi polynomial, which only has two branch points by construction.

We close with a quotation about this example from~\cite{elkies2013complex};
\textit{``monodromy calculation would require some careful estimates to guarantee that the precision was
sufficient to obtain the correct permutations."} 
Much to the contrary, the use of interval arithmetic in~\Cref{algo:certified_monodromy} allows us to painlessly certify this monodromy action without the need for any \textit{a priori} estimates.

\subsection{27 lines on a symmetric cubic surface}\label{sec:27-lines}
Classical algebraic geometry tells us there are $27$ lines on a smooth cubic surface $f(x_1,x_2,x_3,x_4)=0$ in $\mathbb{P}^3.$ 
This enumerative problem may be encoded by a degree-$27$ branched cover over the base $\mathbb{P}^{19}$ parametrizing all cubic surfaces.
In recent work \cite{brazelton2024monodromy}, the restriction of this cover to surfaces invariant under the permutations of coordinates in $S_4$ was shown to have the Klein-four Galois/monodromy group $S_2\times S_2$, which acts intransitively on the $27$ lines. 
Here, we provide the details of a computation reported in that work.

The authors of~\cite{brazelton2024monodromy} show that the moduli space of $S_4$-symmetric cubic surfaces is isomorphic to $\mathbb{P}^2$ and that the branch locus of their problem decomposes into four irreducible components: namely, three lines and a cubic curve. By tracking loops encircling these components, the authors generated the full monodromy group, which was certified computationally to be $S_2\times S_2$.

To construct the polynomial system $F$, we represent a line $\ell \subset \mathbb{P}^3$ by two spanning points $u, v \in \mathbb{C}^4$, introducing eight variables
$u=(a_1,b_1,c_1,d_1)$ and $ v=(a_2,b_2,c_2,d_2)$, so that $\ell = \operatorname{span}\{u,v\} = \{su + tv \mid s,t \in \mathbb{C}\}$. The space of $S_4$-symmetric cubic forms admits the basis \cite[Section~2.3]{brazelton2024monodromy}
\[
m_3=\sum_i x_i^3,\quad 
m_{21}=\sum_{i\neq j} x_i^2 x_j,\quad 
m_{111}=\sum_{i<j<k} x_i x_j x_k,
\]
so any symmetric cubic surface can be written uniquely as
\[
f = m_3 + a_{21} m_{21} + a_{111} m_{111},
\]
where we fix the coefficient of $m_3$ to be $1$. The condition $\ell \subset V(f)$ means that the cubic polynomial $f(su+tv)$ vanishes identically in $(s,t)$. Expanding this expression yields a binary cubic whose four coefficients must vanish, giving four cubic equations in the variables of $u$ and $v$. To remove the redundant degrees of freedom associated with the Grassmannian $Gr(2,4)$, we append four generic linear forms. This produces a square system of eight equations in eight variables.

Varying the parameters $(a_{21},a_{111})$ along loops in the parameter space (avoiding the branch locus) transports the $27$ solutions corresponding to the $27$ lines on $V(f)$, and it provides the monodromy action. Altogether, this defines the incidence variety
\begin{equation}\label{eq:27-lines-incidence}
X = \{(a_{21},a_{111},u,v) \mid \ell \subset V(f)\}
    \subset \mathbb{C}^2 \times Gr(2,4),
\end{equation}
and the projection $\pi : X \to \mathbb{C}^2$ is dominant and generically $27$-to-$1$. 
The incidence variety $X$ in this case is reducible, as can be witnessed by the intransitive Galois/monodromy action. 
Specifically, the Galois/monodromy group $S_2\times S_2$ acts faithfully on $3$ orbits consisting of $4$ lines each, has an additional $6$ orbits consisting of $2$ lines each, and fixes the remaining $3$ lines.

\subsection{Nearest point on an algebraic variety}\label{subsec:ed-degree}

The \emph{nearest point problem} for a real algebraic variety $X\subset \mathbb{R}^n$ asks for a point $x\in X$ that minimizes the Euclidean distance from a given data point $u \in \mathbb{R}^n$. The algebraic complexity of this problem is often measured by the \emph{Euclidean distance degree}~\cite{draisma2016euclidean}, which counts the number of complex-valued critical points of the squared distance function on the smooth locus of $X$ for generic data points $u.$ 
Here, we consider the surface in $\mathbb{R}^3$ defined by $f(x_1,x_2,x_3) = x_3^4 - (x_1^2 + x_2^2)^3 = 0$ which previously appeared as~\cite[Example 5]{duff2025galois}.

For the system, the unknowns are the coordinates $x=(x_1,x_2,x_3)$ and a Lagrange multiplier $\lambda$. Given a data point $u \in \mathbb{R}^3$, the critical points are solutions to the augmented system 
$$F(x,\lambda;u)=\{\nabla_x (\|x-u\|^2 + \lambda f(x_1,x_2,x_3)) ,f(x)\}=\left\{\begin{matrix}2(x_1-u_1)-6x_1\lambda(x_1^2+x_2^2)^2\\2(x_2-u_2)-6x_2\lambda(x_1^2+x_2^2)^2\\
2(x_3-u_3)+4\lambda x_3^3\\f(x_1,x_2,x_3)\end{matrix}\right\}=0.$$
These constraints define the Euclidean distance correspondence $X$, which is the incidence variety
\[
X = \{(u,x,\lambda) \mid F(x,\lambda;u)=0 \} \subset \mathbb{C}^3 \times \mathbb{C}^4.
\]
The projection $\pi :X \to \mathbb{C}^3$ is dominant, with $\deg (\pi)$ equal to the Euclidean distance degree, which is known to be $8.$
Using~\Cref{algo:certified_monodromy}, we were able to certify that the Galois/monodromy group for this example contains the imprimitive group $S_4\wr S_2.$
In fact, the reverse containment also holds, since $S_4\wr S_2$ is the largest group consistent with the decomposition of $\pi $ described in~\cite{duff2025galois}.

For this example, the Euclidean distance degree of $8$ is a pessimistic measure of algebraic complexity compared to the much smaller Galois width of $3.$
On the other hand, it is expected that ``most" instances of the nearest-point problem will have full-symmetric monodromy. 
Our proposed framework offers a novel tool for certifying the algebraic difficulty of such problems.

\subsection{P3P: The Perspective-3-Point problem}\label{subsec:p3p}

The \emph{Perspective-$n$-Point} (P$n$P) problem of computer vision asks for the position and orientation of a perspective camera
that maps $n$ known 3D points onto $n$ corresponding 2D points. Among these, the case $n=3$, known as \emph{P3P}, is the minimal case.
We refer the interested reader to the review paper~\cite{haralick1994review}, which describes classical solutions based on solving cubic equations.
We also point out previous work~\cite{duff2022galois} for background on modeling minimal problems in computer vision as branched covers and heuristics for computing their Galois/monodromy groups. 
Here, our experiments demonstrate that such calculations may be rigorously certified, at least in small examples like P3P.

We apply~\Cref{algo:certified_monodromy} to a classical formulation of P3P as three equations in three unknown distances $d_i$ from the $i$-th 3D point, $1 \le i \le 3 ,$ to the unknown camera center. 
Letting $D_{ij}$ denote the squared distance between the $i$-th and $j$-th 3D points, and $\tilde{y}_i \in S^2$ denote unit-length homogeneous coordinates of the $i$-th 2D point, we have 
\begin{equation}\label{eq:grunert}
d_i^2 - 2 u_{ij} d_j + d_j^2 = D_{ij}, \qquad 1 \le i < j \le 3,
\end{equation}
where
\[
u_{ij} := \frac{\langle \tilde y_i , \tilde y_j \rangle}{\|\tilde y_i\| \|\tilde y_j\|}.
\]
Using a stereographic parametrization of $S^2,$ the equations~\eqref{eq:grunert} define a parametric polynomial system $F(d_1,d_2,d_3; z) = 0$ with parameters $z = (q_i, y_i)$ for $i=1,2,3$, and the variety
\[
X = \{(z,d_1,d_2,d_3) \mid F(d;z)=0\} \subset \mathbb{C}^{15} \times \mathbb{C}^3.
\]
The projection $\pi : X \to \mathbb{C}^{15}$ is dominant, and generically $8$-to-$1$ with $\gw (\Mon_\pi) = 3.$
Using~\Cref{algo:certified_monodromy}, we certify that $\Mon_\pi$ contains the imprimitive group $(S_2 \wr S_4) \cap A_8 \cong S_2^3 \rtimes  S_4$.
The imprimitivity is easily understood in this case due to the symmetry on solutions given by $(d_1,d_2,d_3) \mapsto (-d_1, - d_2, -d_3).$
As in the previous example, the reverse inclusion $\Mon_\pi \subset (S_2 \wr S_4) \cap A_8 $ is also known to hold, due to the explicit computation of a square discriminant for this problem in~\cite[\S 3]{duff2022galois}.
Thus, using~\Cref{algo:certified_monodromy}, we have proven that $\Mon_\pi \cong (S_2 \wr S_4) \cap A_8.$

\subsection{5-point relative pose problem}\label{subsec:5pp} 

We end with the \emph{$5$-point relative pose problem}, another well-studied minimal problem in computer vision.
In a typical formulation of this problem, the unknowns are a rotation $R \in SO(3)$ and a translation $t \in \mathbb{C}^3$ (defined up to scale), which together describe the relative camera geometry. Given five pairs of image points $(y_{1,i}, y_{2,i}) \in \mathbb{C}^2 \times \mathbb{C}^2$ for $i=1,\dots, 5$, we lift them to homogeneous coordinates $\tilde y_{1,i}, \tilde y_{2,i} \in \mathbb{C}^3$. The geometric constraints are encoded by the \emph{epipolar constraint}, 
\begin{equation}\label{eq:epipolar-constraint}
\tilde y_{2,i}^\top ([t]_\times R) \, \tilde y_{1,i} = 0, \qquad i=1,\dots,5.
\end{equation}
Here, $[t]_\times$ denotes the skew-symmetric matrix inducing the cross product with $t$. 
Note that the unknowns can be combined into a matrix $E=[t]_\times R$, which is known as the \emph{essential matrix.}
These equations define a polynomial system $F(R, t; z) = 0$ with parameters $z = (y_{1,i}, y_{2,i})$ for $i=1,\dots,5$. The parameter space for the input data is $\mathbb{C}^{20}$, leading to the incidence variety
\[
X = \{(z;R, t) \mid F(R, t; z)=0\} \subset \mathbb{C}^{20} \times (SO(3) \times \mathbb{P}^2).
\]
The projection $\pi : X \to \mathbb{C}^{20}$ is dominant and generically $20$-to-$1$. 
Its monodromy is imprimitive, as there are only $10= \gw (\Mon_\pi)$ distinct essential matrices consistent with generic data, but each essential matrix admits two possible relative poses $(R,t).$ See e.g.~\cite{stewenius2006recent} for more details.

To make certified tracking more tractable, we eliminate the translation variables $t$ by noting that the epipolar constraint~\eqref{eq:epipolar-constraint} is linear in $t$. This can be used to derive three independent cubic constraints on the rotation matrix $R$ alone; specifically, the determinants 
\begin{equation}
\det 
\left(
\begin{array}{c|c|c}
\tilde{y}_{2,1} \times R \tilde{y}_{1,1}
& 
\tilde{y}_{2,2} \times R \tilde{y}_{1,2}
& 
\tilde{y}_{2,i} \times R \tilde{y}_{1,i}
\end{array}\right) =0,
\quad 
i=3,4,5.
\end{equation}
Using Cayley's rational parametrization of $\operatorname{SO} (3),$ 
\begin{equation}
R(x,y,z) = 
\displaystyle\frac{1}{1+x^2+y^2+z^2}  \left(\!\begin{array}{ccc}
     x^{2}-y^{2}-z^{2}+1&2\,x\,y-2\,z&2\,x\,z+2\,y\\
     2\,x\,y+2\,z&-x^{2}+y^{2}-z^{2}+1&2\,y\,z-2\,x\\
     2\,x\,z-2\,y&2\,y\,z+2\,x&-x^{2}-y^{2}+z^{2}+1
     \end{array}\!\right),
\end{equation}
we thus obtain a parametric system in three variables, whose Galois/monodromy group is isomorphic to $\Mon_\pi.$
\Cref{algo:certified_monodromy} certifies that $\Mon_\pi$ contains the imprimitive group $(S_2 \wr S_{10}) \cap A_{20},$ thus strengthening heuristic evidence in~\cite[\S 4]{duff2022galois} for the conjecture that $\Mon_\pi \cong (S_2 \wr S_{10}) \cap A_{20}$.

\bibliography{ref}
\bibliographystyle{abbrv}

\end{document}

%% file: figure.tex
\begin{tikzpicture}
    \begin{axis}[
        view={60}{30},
        axis lines=none,
        z buffer=sort,
        width=15cm, height=15cm,
        xmin=-2.5, xmax=2.5,
        ymin=-2.5, ymax=2.5,
        zmin=-2.7, zmax=3,
        colormap={whiteblue}{color(0cm)=(white!70); color(1cm)=(blue!20)}
    ]

\addplot3[
  surf,
  shader=flat,
  opacity=0.6,
  draw=none,
  domain=0:2*pi,
  domain y=0:2,
  color=gray!25,  
  samples=40,
]
(
  {y*cos(deg(x))},
  {y*sin(deg(x))},
  {-3}
);
    \coordinate (z0) at (axis cs: 1, 0, -3);
    \coordinate (z1) at (axis cs: 0, 1, -3);
    \coordinate (z2) at (axis cs: -1, 0, -3);
    \coordinate (z3) at (axis cs: 0, -1, -3);

    \draw[dashed, thick, black!70] (z0) -- (z3);
    \draw[dashed, thick, black!70] (z1) -- (z3);
    \draw[ultra thick, orange] (z0) -- (z1);
    \draw[ultra thick, red] (z0) -- (z2);
    \draw[ultra thick, green!70!black] (z1) -- (z2);
    \draw[ultra thick, purple] (z2) -- (z3);

    \fill[blue] (z0) circle (2pt) node[right, black] {$z_0$};
    \fill[black] (z1) circle (1.5pt);
    \fill[black] (z2) circle (1.5pt);
    \fill[black] (z3) circle (1.5pt);

    \addplot3 [
        surf,
        opacity=0.6,        
        domain=0.3:1.8,
        domain y=0:8*pi,  
        samples=20,
        samples y=100,      
    ] (
        {x * cos(deg(y))},  
        {x * sin(deg(y))},  
        {2 * (1 - cos(deg(y)/4))}  
    );

    \addplot3[
        thick,
        orange,         
        domain=0:90, 
        samples=30,  
        variable=t      
    ]
    (
        {cos(t)},       
        {sin(t)},       
        {0.5 * t / 90} 
    );
\draw[thick, green!70!black]
  (axis cs:0,1,0.5)
  .. controls (axis cs:-0.2,1.1,0.9) and (axis cs:-0.9,0.8,1.1)
  .. (axis cs:-1,0,1);

\draw[thick, purple!70!black]
  (axis cs:-1,0,1.0)
  .. controls (axis cs:-0.9,-0.5,1.5) and (axis cs:-0.4,-0.9,1.3)
  .. (axis cs:0,-1,1.2);


  \addplot3[
  thick,
  red,
  z buffer=none,         
  smooth
] coordinates {
  (1.000000, 0.000000, 0.000000)
  (0.928320, -0.330273, 0.004883)
  (0.857812, -0.579688, 0.020313)
  (0.780273, -0.761133, 0.047461)
  (0.687500, -0.887500, 0.087500)
  (0.571289, -0.971680, 0.141602)
  (0.423437, -1.026563, 0.210938)
  (0.235742, -1.065039, 0.296680)
  (0.000000, -1.100000, 0.400000)
  (-0.120898, -1.097852, 0.446289)
  (-0.254688, -1.082813, 0.507812)
  (-0.395508, -1.041992, 0.581055)
  (-0.537500, -0.962500, 0.662500)
  (-0.674805, -0.831445, 0.748633)
  (-0.801562, -0.635938, 0.835938)
  (-0.911914, -0.363086, 0.920898)
  (-1.000000, 0.000000, 1.000000)
};

\draw[thick, orange, opacity=0.7]
  (axis cs:0,1,0.5)
  .. controls (axis cs:-0.2,1.3,0.9) and (axis cs:-1.2,0.9,1.1)
  .. (axis cs:-1.2,0,1) 
  .. controls (axis cs:-1.2,-.5,1) and (axis cs:-.4,-1.2,1.05)
  .. (axis cs:0,-1.1,1.2);
\draw[thick, orange, opacity=.85]
  (axis cs:0,-1.1,1.2)
  .. controls (axis cs:.1,-1.1,1.2) and (axis cs:.8,-1.1,1.6)
  .. (axis cs:1,0,2);

    \draw[dashed, blue, thin] (z0) -- (axis cs: 1, 0, 0);
    \fill[blue] (axis cs: 1, 0, 0) circle (2pt) node[right] {};
    \fill[blue] (axis cs: 1, 0, 2) circle (2pt) node[right] {};
    \fill[blue!50] (axis cs: 1, 0, 1.7) circle (2pt) node[right] {};
    
    \draw[dashed, blue, thin] (z0) -- (axis cs: 1, 0, 4);
    \fill[blue] (axis cs: 1, 0, 4) circle (2pt) node[right] {};

    \fill[blue] (axis cs: 0, 1, -0.3) circle (2pt) node[right] {};
    \fill[blue!50] (axis cs: 0, 1, .5) circle (2pt) node[right] {};
    \fill[blue] (axis cs: 0, 1, 1.2) circle (2pt) node[right] {};
    \fill[blue] (axis cs: 0, 1, 4) circle (2pt) node[right] {};
    \draw[dashed, blue, thin] (z1) -- (axis cs: 0, 1, 4);

    \fill[blue!50] (axis cs: -1, 0, .3) circle (2pt) node[right] {};
    \fill[blue] (axis cs: -1, 0, 1) circle (2pt) node[right] {};
    \fill[blue!50] (axis cs: -1, 0, 3) circle (2pt) node[right] {};
    \fill[blue] (axis cs: -1, 0, 3.8) circle (2pt) node[right] {};
    \draw[dashed, blue, thin] (z2) -- (axis cs: -1, 0, 3.8);

    \fill[blue] (axis cs: 0, -1, 0) circle (2pt) node[right] {};
    \fill[blue] (axis cs: 0, -1, 1.2) circle (2pt) node[right] {};
    \fill[blue] (axis cs: 0, -1, 2.4) circle (2pt) node[right] {};
    \fill[blue] (axis cs: 0, -1, 4) circle (2pt) node[right] {};
    \draw[dashed, blue, thin] (z3) -- (axis cs: 0, -1, 4);




    	\node at (axis cs: -.75, -.75, -3) [align=right] {$\{(2,2)\}$};
        	\node at (axis cs: .3, -.3, -3) [align=right] {$\{(1,2)\}$};
        	\node at (axis cs: .9, 1, -3) [align=right] {$\{(1,2),(3,2)\}$};
        	\node at (axis cs: -.7, .7, -3) [align=right] {$\{(2,2)\}$};
        	\node at (axis cs: -1.6, -1.6, -3) [align=right] {$\mathbb{C}^m$};
            \node at (axis cs: -1.6, -1.6, 2) [align=right] {$\mathbb{C}^n$};

    \end{axis}
\end{tikzpicture}